\documentclass[11pt,a4paper]{article}

\usepackage{inputenc}
\usepackage{amsmath}
\usepackage{bm}
\usepackage{bbold}
\usepackage{amsthm}
\usepackage{enumerate}

\usepackage[hyphens]{url}

\usepackage{hyperref}
\usepackage{breakurl}

\usepackage{tikz}\tikzset{x=1cm,y=1cm,z=1cm}
\usetikzlibrary{perspective}

\usepackage{pgfplots}\pgfplotsset{compat=1.16}

\title{Algebraic solution to constrained bi-criteria decision problem of rating alternatives through pairwise comparisons
\thanks{Mathematics, 2021, 9(4), 303, doi:10.3390/math9040303}}

\author{N. Krivulin\thanks{Faculty of Mathematics and Mechanics, Saint Petersburg State University, 28 Universitetsky Ave., St.~Petersburg, 198504, Russia, nkk@math.spbu.ru.}
\thanks{This work was supported in part by the Russian Foundation for Basic Research (grant No. 20-010-00145).}}

\date{}

\newtheorem{theorem}{Theorem}
\newtheorem{lemma}[theorem]{lemma}
\newtheorem{corollary}[theorem]{corollary}

\theoremstyle{definition}
\newtheorem{example}{Example}

%\setlength{\unitlength}{1mm}

%\sloppy

\begin{document}

\maketitle

\begin{abstract}
We consider a decision-making problem to evaluate absolute ratings of alternatives from the results of their pairwise comparisons according to two criteria, subject to constraints on the ratings. We formulate the problem as a bi-objective optimization problem of constrained matrix approximation in the Chebyshev sense in logarithmic scale. The problem is to approximate the pairwise comparison matrices for each criterion simultaneously by a common consistent matrix of unit rank, which determines the vector of ratings. We represent and solve the optimization problem in the framework of tropical (idempotent) algebra, which deals with the theory and applications of idempotent semirings and semifields. The solution involves the introduction of two parameters that represent the minimum values of approximation error for each matrix and thereby describe the Pareto frontier for the bi-objective problem. The optimization problem then reduces to a parametrized vector inequality. The necessary and sufficient conditions for solutions of the inequality serve to derive the Pareto frontier for the problem. All solutions of the inequality, which correspond to the Pareto frontier, are taken as a complete Pareto-optimal solution to the problem. We apply these results to the decision problem of interest and present illustrative examples.
\\

\textbf{Keywords:} idempotent semifield, tropical bi-objective optimization, Pareto-optimal solution, constrained bi-criteria decision problem, pairwise comparisons.
\\

\textbf{MSC (2020):} 90C24, 15A80, 90B50, 90C29, 90C47
\end{abstract}

\section{Introduction}

This paper is concerned with an application of tropical algebra to a bi-criteria decision problem of rating alternatives by pairwise comparisons. Tropical (idempotent) algebra deals with the theory and applications of algebraic systems with idempotent operations, typically defined as taking the maximum and minimum of two arguments. Since the first publications in 1950--60s, models and methods of tropical mathematics have found increased use in solving various problems in operations research, computer science and other fields. Rapid advances in the area are demonstrated in many published works, including the recent monographs and textbooks \cite{Baccelli1993Synchronization,Kolokoltsov1997Idempotent,Golan2003Semirings,Heidergott2006Maxplus,Mceneaney2006Maxplus,Gondran2008Graphs,Maclagan2015Introduction}. 

One of the current research directions in tropical mathematics are optimization problems which can be defined and solved in terms of tropical algebra. These problems are often formulated as the minimization or maximization of functions on idempotent semifields (algebraic systems with idempotent addition and invertible multiplication). For some notable applications, including time constrained project scheduling, minimax location problems with Chebyshev and rectilinear distances, and decision making through pairwise comparisons, tropical optimization can provide a direct complete solution that explicitly describes all solutions in a compact parametric form, ready for formal analysis and instant computations with a decent polynomial time complexity. 

The problem of evaluating absolute ratings (priorities, scores, weights) of alternatives (choices, decisions, possibilities) from the results of their pairwise comparisons according to several criteria is of great practical importance in multi-criteria decision making \cite{Saaty1990Analytic,Gavalec2015Decision}. The most commonly used approaches to handle this problem in terms of conventional mathematics are the Analytical Hierarchy Process (AHP) method, developed by T.~L.~Saaty~\cite{Saaty1990Analytic,Saaty2013Onthemeasurement}, and the Weighted Geometric Means (WGM) method \cite{Crawford1985Note,Barzilai1997Deriving}. The AHP method is based on a heuristic procedure that provides a numerical solution whose accuracy is normally accepted by practitioners, but optimality cannot be theoretically guaranteed. The WGM method offers an analytical result in a rather simple form, which proves to be a Pareto-optimal solution and thus is formally justified as optimal. Both methods, however, can hardly be used or extended to obtain all Pareto-optimal solutions of the pairwise comparison problem, which are of particular interest in multi-criteria optimization.    

The existing literature on the multi-criteria pairwise comparison problem, which presents a variety of solution methods and application examples, mainly focuses on unconstrained problems where no specific restrictions are imposed on the values of ratings under evaluation. At the same time, increasing complexity of contemporary decision-making processes, including decision models, methods and operating conditions, calls for the development of new decision approaches to take into account possible conditions on the ratings, such as order relations or box constraints fixed in advance. 

In the framework of tropical algebra, the problem of rating alternatives by pairwise comparisons is examined in a range of papers, including \cite{Elsner2004Maxalgebra,Elsner2010Maxalgebra,Gursoy2013Analytic,Tran2013Pairwise,Goto2020Polyad}. A new approach is proposed in \cite{Krivulin2015Rating,Krivulin2016Using,Krivulin2020Using,Krivulin2019Tropical}, which offers a direct complete solution in an explicit parametric form. Specifically, a tropical analogue of the AHP is developed in \cite{Krivulin2016Using,Krivulin2019Tropical}, and a constrained single-criterion problem is solved in \cite{Krivulin2015Rating}. For an unconstrained bi-criteria problem, a complete Pareto-optimal solution is obtained in \cite{Krivulin2020Using}. 

In this paper, we consider a new decision-making problem of rating alternatives through pairwise comparisons according to two criteria, subject to constraints on the ratings. In practice, the pairwise comparison data normally come from human judgments of experts (analysts, specialists, referees) or non-experts (customers, consumers, users), whereas the constraints result from conditions and limitations, which are inherent in the nature of alternatives or the decision-making process.

We follow the general solution scheme proposed in \cite{Krivulin2020Using} to solve the bi-criteria problem without constraints. This scheme is further extended below to develop a new general solution that accommodates constraints in an efficient way. We start with the formulation of the problem as a bi-objective optimization problem of constrained matrix approximation in the Chebyshev sense in logarithmic scale ($\log$-Chebyshev approximation). The problem is to approximate the pairwise comparison matrices for each criterion simultaneously by a common consistent matrix of unit rank, which determines the vector of ratings.

Furthermore, we represent and solve the optimization problem in the tropical algebra setting. The solution approach involves the introduction of two parameters that represent the minimum values of the approximation error for each matrix and thereby describe the Pareto frontier for the bi-objective problem. The optimization problem then reduces to a parametrized vector inequality. The necessary and sufficient conditions for solutions of this inequality serve to derive the Pareto frontier for the optimization problem. All solutions of the inequality, which correspond to the Pareto frontier, are taken as a complete Pareto-optimal solution of the optimization problem.

The rest of the paper proceeds as follows. In Section~\ref{S-CBCDP}, we describe and discuss the bi-criteria decision-making problem of interest, which motivates the study. \mbox{Section~\ref{S-PADR}} offers a brief overview of basic facts about tropical algebra, which are used in the subsequent solutions. (This section can be skipped by readers familiar with the subject and notation.) Section~\ref{S-CBOOP} provides a complete Pareto-optimal solution to a constrained bi-objective tropical optimization problem in an exact analytical form. We apply the result obtained to the decision-making problem under consideration and present illustrative examples in \mbox{Section~\ref{S-ACBCDP}}. Section~\ref{S-C} includes some concluding remarks.

\section{Constrained Bi-Criteria Decision Problem}
\label{S-CBCDP}

The purpose of this section is to describe and discuss the decision-making problem which serves to motivate and illustrate the study. Suppose that one needs to evaluate alternatives in a decision-making process of selecting alternatives according to their ratings. Given relative results of pairwise comparison of alternatives on a continuous scale, obtained with respect to two criteria, the problem is to derive absolute ratings of alternatives, subject to constraints imposed on the ratings. The problem of rating alternatives dates back to the classical work by L.~L.~Thurstone \cite{Thurstone1927Law} in the first part of the last century, and since that time has been the subject of numerous investigations.

\subsection{Unconstrained Pairwise Comparisons Under Single Criterion}

Assume that $n$ alternatives are compared in pairs under a single criteria, which results in a pairwise comparison matrix $\bm{A}=(a_{ij})$, where the entry $a_{ij}>0$ indicates that alternative $i$ is $a_{ij}$ times superior (more preferred) than $j$. The entries of $\bm{A}$ satisfy the equality $a_{ij}=1/a_{ji}$, and thus this matrix is positive and symmetrically reciprocal.  

A pairwise comparison matrix $\bm{A}$ is called consistent if its entries possess the transitivity property $a_{ij}=a_{ik}a_{kj}$, which corresponds to the natural transitivity of judgments. If the matrix $\bm{A}$ is consistent, then there exists a unique (up to a positive factor) positive vector $\bm{x}=(x_{i})$, which determines the entries of $\bm{A}$ by the conditions $a_{ij}=x_{i}/x_{j}$. It follows from these conditions that the entries of $\bm{x}$ directly specify the individual ratings of alternatives and hence completely solve the problem of interest.

However, the pairwise comparison matrices, which are encountered in real-world problems, are commonly not consistent due to various cognitive or technical limitations of the comparison process. To overcome this difficulty, several techniques \cite{Saaty1984Comparison,Saaty1990Analytic,Choo2004Common}, which range from various heuristic procedures to matrix approximation, are used to replace a given inconsistent matrix $\bm{A}$ of pairwise comparisons by a consistent matrix $\bm{X}=(x_{i}/x_{j})$ that is close to $\bm{A}$ in some sense. Since the matrix $\bm{X}$ is completely determined by a vector $\bm{x}=(x_{i})$ of individual ratings, these techniques typically concentrate on the direct derivation of the vector $\bm{x}$ rather than of the matrix $\bm{X}$.

The heuristic solutions are often based on different schemes of aggregating columns in the pairwise comparison matrix, such as the method of weighted column sums \cite{Choo2004Common}. A commonly used technique of deriving weights from pairwise comparisons is the principal eigenvector method \cite{Saaty1977Scaling,Saaty1990Analytic,Saaty2013Onthemeasurement}. The method exploits the principal (Perron) eigenvector of the pairwise comparison matrix as a vector of ratings. Despite the wide applications of the principal eigenvector method and many convincing arguments presented in favor of this method, it cannot guarantee, in a mathematically strict sense, that the solution obtained is~optimal. 

In contrast to the heuristic methods, the techniques that derive approximating matrices by minimizing a distance between matrices (approximation error) offer a commonly accepted and mathematically justified approach to the problem. The available procedures differ according to distance functions and measurement scales used to evaluate the approximation error. The application of Euclidean (Frobenius), rectilinear (Manhattan) and Chebyshev metrics on the standard linear scale in approximation of pairwise comparison matrices normally results in complicated multiextremal nonlinear optimization problems that are hard to solve \cite{Saaty1984Comparison,Choo2004Common}, and thus has no wide use.

The approximation in logarithmic scale with logarithm to a base greater than one leads to optimization problems that can usually be solved numerically by an appropriate computational algorithm, and, in some cases, analytically \cite{Saaty1984Comparison,Barzilai1997Deriving,Choo2004Common,Portugal2011Weberfechner}. Minimization of the Euclidean distance in logarithmic scale ($\log$-Euclidean approximation) offers a unique (up to a positive factor) solution, which is given parametrically in an exact analytical form. Due to the simplicity of solution and the rigorous formal justification, the $\log$-Euclidean approximation finds extensive application in rating alternatives from pairwise comparisons, where it is known as the method of geometric means.   

Both rectilinear and Chebyshev approximation in logarithmic scale can be reduced by an appropriate transformation to solving linear programs using one of the computational algorithms available in linear programming. This algorithmic approach, however, cannot provide the derivation of a complete analytical result that allows describing all solutions in a direct explicit form.  

The implementation of $\log$-Chebyshev approximation involves minimizing the maximum absolute differences between the logarithms of the corresponding entries of a pairwise comparison matrix $\bm{A}=(a_{ij})$ and consistent matrix $\bm{X}=(x_{i}/x_{j})$ by solving the problem to
\begin{equation}
\begin{aligned}
&
\text{minimize}
&&
\max_{1\leq i,j\leq n}
\left|\log a_{ij}-\log\frac{x_{i}}{x_{j}}\right|.
\end{aligned}
\label{P-min_maxijlogaijlogxixj}
\end{equation}

Note that this problem can be reduced to an equivalent problem in the same variables, which does not involve logarithms (see, e.g., \cite{Krivulin2020Using,Krivulin2019Tropical}). Indeed, representing the absolute value function as the maximum of two opposite values, using the monotonicity of the logarithm and the condition $a_{ij}=1/a_{ji}$ turn the objective function into
\begin{equation*}
\max_{1\leq i,j\leq n}
\left|\log a_{ij}-\log\frac{x_{i}}{x_{j}}\right|
=
\log\max_{1\leq i,j\leq n}\max\left\{\frac{a_{ij}x_{j}}{x_{i}},\frac{x_{i}}{a_{ij}x_{j}}\right\}
=
\log\max_{1\leq i,j\leq n}\frac{a_{ij}x_{j}}{x_{i}}.
\end{equation*}

Moreover, the monotonicity property allows replacing the minimization of the logarithm by minimizing its argument, and therefore, the problem of $\log$-Chebyshev approximation of $\bm{A}=(a_{ij})$ at \eqref{P-min_maxijlogaijlogxixj} reduces to finding positive vectors $\bm{x}=(x_{i})$ that
\begin{equation}
\begin{aligned}
&
\text{minimize}
&&
\max_{1\leq i,j\leq n}\frac{a_{ij}x_{j}}{x_{i}}.
\end{aligned}
\label{P-min_maxijaijxixj}
\end{equation}

A solution approach to handle problem \eqref{P-min_maxijaijxixj} is proposed in \cite{Krivulin2015Rating,Krivulin2016Using,Krivulin2019Tropical} in the framework of tropical algebra. Using this approach, a complete analytical solution is derived, which describes all solution vectors in a compact parametric form.

\subsection{Constrained Pairwise Comparisons Under Two Criteria}

Suppose now that $n$ alternatives are compared in pairs according to two (unweighted) criteria to produce two pairwise comparison matrices $\bm{A}=(a_{ij})$ and $\bm{B}=(b_{ij})$. The problem of rating alternatives takes the form of finding a common consistent matrix $\bm{X}=(x_{i}/x_{j})$ that is close to (or approximates) both matrices $\bm{A}$ and $\bm{B}$ simultaneously. The new problem has two objectives that, in general, are in conflict, and thus requires the application of multi-objective optimization techniques.   

A wide accepted approach to solve the multi-criteria problems of pairwise comparisons uses the AHP decision method \cite{Saaty1990Analytic,Saaty2013Onthemeasurement}, which is based on the principal eigenvector calculation. In the case of the bi-criteria problem in question, the method produces a unique numerical solution in the form of the sum of normalized principal eigenvectors of the matrices $\bm{A}$ and $\bm{B}$ (taken with equal weights). Another approach follows the WGM method \cite{Crawford1985Note,Barzilai1997Deriving}, which offers a unique analytical result, where the elements of the vector of ratings are calculated by multiplying the corresponding row geometric means of pairwise comparison matrices and taking square roots of the results.    

A solution in terms of tropical algebra that, which applies the $\log$-Chebyshev approximation and provides a direct analytical representation of the result, is developed in \cite{Krivulin2015Rating,Krivulin2016Using,Krivulin2019Tropical}. The solution uses a scalarization technique to reduce the vector-valued bi-criteria problem to a single criterion problem in the form of \eqref{P-min_maxijaijxixj} with the matrix $\bm{A}$ replaced by the matrix $\bm{D}=(d_{ij})$, where $d_{ij}=\max\{a_{ij},b_{ij}\}$. 

Since no single solution generally exists to satisfy all objectives simultaneously, the solutions, which yield the best compromise between objectives, are normally of primary interest in multi-objective problems. A common approach to obtain the best compromising solution is to derive a set of Pareto-optimal (Pareto-efficient, non-dominated) solutions at which none of the objectives can be improved without making another objective worse~\cite{Ehrgott2005Multicriteria,Luc2008Pareto,Benson2009Multiobjective,Pappalardo2008Multiobjective}. In bi-objective problems, the image of the Pareto-optimal set, referred to as the Pareto frontier, can often be visualized as a trade-off curve in the plane of objectives, and used to describe all Pareto-optimal solutions \cite{Ruzika2005Approximation}.

In the framework of $\log$-Chebyshev approximation, the bi-criteria problem of pairwise comparisons takes the form of the bi-objective optimization problem
\begin{equation*}
\begin{aligned}
&
\text{minimize}
&&
\left(
\max_{1\leq i,j\leq n}\frac{a_{ij}x_{j}}{x_{i}},
\max_{1\leq i,j\leq n}\frac{b_{ij}x_{j}}{x_{i}}
\right),
\end{aligned}
%\label{P-min_maxijaijxixj-maxijbijxixj}
\end{equation*}
which has a complete Pareto-optimal solution given in the tropical algebra setting in \cite{Krivulin2020Using}, where the solution set is explicitly described in a parametric vector form. 

Let us now assume that the absolute ratings of alternatives must satisfy additional constraints, which restrict possible relations between individual ratings. These constraints may utilize prior knowledge or reflect approved and adopted results of previous studies, which are independent of the current comparison data. Specifically, an alternative can be known a priori to be preferable to another by virtue of all aspects of comparison, regardless of subjective judgments.

As an example, consider the problem of reconstruction of a total order on a set of alternatives, where the information on relations between some elements is unknown (lost, hidden, deteriorated) to make the set partially ordered. A reasonable way to restore the total order is to estimate the ratings of alternatives from pairwise comparisons (provided, for instance, by one or more experts), while preserving the known relations in the form of constraints on the ratings. The order is then defined by the ranks of alternatives, deduced from ratings obtained by solving this constrained problem.

Another illustration is a two-stage procedure of evaluating alternatives, which is to combine the outcome of both stages in such a way that some results of the first stage override the results of the second. Suppose that the first stage separates the set of alternatives into groups according to an important property. For example, the set can be divided into the groups of relatively low, medium and high estimated costs incurred by taking the alternatives. The group affiliation is considered as the dominant criterion and can be represented by constraints, which involves that the ratings of alternatives of the first group are not less than those of the second, and the ratings of the second are not less than the third. At the second stage, the ratings of all alternatives are evaluated using pairwise comparisons by experts according to criteria other than and independent of cost levels (the information about the costs may be confidential and hidden from the experts). The final ratings are derived from the results of pairwise comparisons under the constraints imposed by group affiliation.

Given real numbers $c_{ij}\geq0$, the constraints can be represented by the inequalities
\begin{equation*}
c_{ij}x_{j}
\leq
x_{i},
\qquad
i,j=1,\ldots,n,
\end{equation*}
where $c_{ij}$ specifies that the rating of alternative $i$ must be not less than $c_{ij}$ times the rating of $j$. The value $c_{ij}=1$ indicates that the rating of $i$ is not less than $j$, whereas $c_{ij}=0$ shows that no lower bound is defined on the rating of $i$ with respect to $j$. Combining the inequalities for all $j$ into one yields
\begin{equation*}
\max_{1\leq j\leq n}
c_{ij}x_{j}
\leq
x_{i},
\qquad
i=1,\ldots,n.
\end{equation*}

The bi-objective problem now turns into the next problem: given symmetrically reciprocal matrices $\bm{A}=(a_{ij})$ and $\bm{B}=(b_{ij})$, and non-negative matrix $\bm{C}=(c_{ij})$, find a positive vector $\bm{x}=(x_{i})$ to
\begin{equation}
\begin{aligned}
&
\text{minimize}
&&
\left(
\max_{1\leq i,j\leq n}\frac{a_{ij}x_{j}}{x_{i}},
\max_{1\leq i,j\leq n}\frac{b_{ij}x_{j}}{x_{i}}
\right);
\\
&
\text{subject to}
&&
\max_{1\leq j\leq n}
c_{ij}x_{j}
\leq
x_{i},
\quad
i=1,\ldots,n.
\end{aligned}
\label{P-min_maxijaijxixj-maxijbijxixj-maxijxjleqxi}
\end{equation}

In the subsequent sections, a complete Pareto-optimal solution of problem \eqref{P-min_maxijaijxixj-maxijbijxixj-maxijxjleqxi} is derived in a direct parametric form in terms of tropical algebra.

\section{Preliminary Algebraic Definitions and Results}
\label{S-PADR}

We start with a brief overview of main definitions, basic facts and preliminary results of tropical algebra, based mainly on \cite{Krivulin2015Extremal,Krivulin2015Multidimensional,Krivulin2017Direct}, to provide a formal framework to the solution of the bi-objective optimization problem in what follows. For further details on tropical mathematics and its applications, one can consult, for example, the monographs and textbooks~\cite{Baccelli1993Synchronization,Kolokoltsov1997Idempotent,Golan2003Semirings,Heidergott2006Maxplus,Mceneaney2006Maxplus,Gondran2008Graphs,Maclagan2015Introduction}.

\subsection{Idempotent Semifields}

Consider a set $\mathbb{X}$ that is closed under operations $\oplus$ (addition) and $\otimes$ (multiplication), and includes their neutral elements $\mathbb{0}$ (zero) and $\mathbb{1}$ (one). An algebraic structure $(\mathbb{X},\oplus,\otimes,\mathbb{0},\mathbb{1})$ is called an idempotent semifield if $(\mathbb{X},\oplus,\mathbb{0})$ is a commutative idempotent monoid (semilattice), $(\mathbb{X}\setminus\{\mathbb{0}\},\otimes,\mathbb{1})$ is an Abelian group, and multiplication $\otimes$ distributes over addition $\oplus$.

For each $x\ne\mathbb{0}$ in the semifield, its multiplicative inverse is denoted by $x^{-1}$. The power notation with integer exponents specifies iterated products defined for each $x\ne\mathbb{0}$ and integer $p>0$ as $\mathbb{0}^{p}=\mathbb{0}$, $x^{0}=\mathbb{1}$, $x^{p}=xx^{p-1}$, $x^{-p}=(x^{-1})^{p}$. (Here and hereafter the multiplication sign $\otimes$ is, as usual, omitted for the sake of brevity.) The equation $x^{p}=a$ is assumed to have a unique solution $x$ for any $a\in\mathbb{X}$ and integer $p>0$, which extends the power notation to rational exponents.

Idempotent addition introduces a partial order on $\mathbb{X}$ by the rule: $x\leq y$ if and only if $x\oplus y=y$. With this order, both addition and multiplication are monotone in each arguments, which means that the inequality $x\leq y$ yields the inequalities $x\oplus z\leq y\oplus z$ and $xz\leq yz$ for $x,y,z\in\mathbb{X}$. Exponentiation is monotone: the inequality $x\leq y$ results in $x^{r}\geq y^{r}$ if $r<0$, and $x^{r}\leq y^{r}$ if $r\geq0$ for $x,y\ne\mathbb{0}$. Addition possesses the extremal property (the majority law) that $x\leq x\oplus y$ and $y\leq x\oplus y$. Finally, the inequality $x\oplus y\leq z$ is equivalent to the pair of inequalities $x\leq z$ and $y\leq z$. 

In what follows, the above partial order is assumed extended to a linear order to make the semifields under consideration totally ordered. 

A typical example of the idempotent semifield is the system $\mathbb{R}_{\max}=(\mathbb{R}_{+},\max,\times,0,1)$, where $\mathbb{R}_{+}$ is the set of non-negative reals. This semifield, which is often called the max-algebra, has addition defined as maximum, and multiplication as usual. The zero $\mathbb{0}$ and one $\mathbb{1}$ respectively coincide with the arithmetic $0$ and $1$, the power notation and the notion of inversion have the ordinary interpretation, and the order induced by idempotent addition corresponds to the natural linear order on $\mathbb{R}_{+}$.

\subsection{Matrices and Vectors}

Matrices over $\mathbb{X}$ are introduced in the ordinary way. The set of matrices with $m$ rows and $n$ columns is denoted $\mathbb{X}^{m\times n}$. A matrix with all entries equal to $\mathbb{0}$ is the zero matrix denoted by $\bm{0}$. A matrix without zero columns is called column-regular.

Matrix operations follow the conventional rules with the arithmetic addition and multiplication replaced by $\oplus$ and $\otimes$. The scalar inequalities which represent properties of scalar operations extend to the matrix operations, where the inequalities are interpreted~entry-wise.  

Consider square matrices of order $n$ from $\mathbb{X}^{n\times n}$. A matrix with all diagonal entries equal to $\mathbb{1}$ and the non-diagonal entries to $\mathbb{0}$ is the identity matrix $\bm{I}$. 

The power notation with non-negative integer exponents specifies repeated multiplication of a matrix by itself, and is defined as $\bm{A}^{0}=\bm{I}$ and $\bm{A}^{p}=\bm{A}\bm{A}^{p-1}$ for any matrix $\bm{A}$ and integer $p>0$.

The trace of a matrix $\bm{A}=(a_{ij})$ is given by $\mathop\mathrm{tr}\bm{A}=a_{11}\oplus\cdots\oplus a_{nn}$ and possesses usual properties, including its invariance under cyclic permutations of matrices.

For any matrix $\bm{A}\in\mathbb{X}^{n\times n}$, a tropical analogue of matrix determinant is a function given by
\begin{equation*}
\mathop\mathrm{Tr}(\bm{A})
=
\mathop\mathrm{tr}\bm{A}
\oplus\cdots\oplus
\mathop\mathrm{tr}\bm{A}^{n}.
\end{equation*}

Provided that $\mathop\mathrm{Tr}(\bm{A})\leq\mathbb{1}$, the asterate operator (Kleene star) is defined as
\begin{equation*}
\bm{A}^{\ast}
=
\bm{I}\oplus\bm{A}
\oplus\cdots\oplus
\bm{A}^{n-1}.
\end{equation*}

A matrix with one column (row) is a column (row) vector. The set of column vectors of order $n$ is denoted by $\mathbb{X}^{n}$. All vectors below are column vectors if not explicitly transposed. A vector with all elements equal to $\mathbb{0}$ is the zero vector denoted $\bm{0}$. A vector without zero elements is called regular.

Conjugate transposition of a nonzero vector $\bm{x}=(x_{j})$ yields the row vector $\bm{x}^{-}=(x_{j}^{-})$, where $x_{j}^{-}=x_{j}^{-1}$ if $x_{j}\ne\mathbb{0}$, and $x_{j}^{-}=\mathbb{0}$ otherwise.
A vector $\bm{y}$ is collinear with $\bm{x}$ if $\bm{y}=c\bm{x}$ for some $c\in\mathbb{X}$.

A scalar $\lambda\in\mathbb{X}$ is an eigenvalue of a matrix $\bm{A}\in\mathbb{X}^{n\times n}$ if there exists a nonzero vector $\bm{x}\in\mathbb{X}^{n}$, called an eigenvector of $\bm{A}$, such that $\bm{A}\bm{x}=\lambda\bm{x}$. The maximum eigenvalue (with respect to the order induced by idempotent addition) is called the spectral radius of the matrix and calculated as
\begin{equation*}
\lambda
=
\mathop\mathrm{tr}\bm{A}
\oplus\cdots\oplus
\mathop\mathrm{tr}\nolimits^{1/n}(\bm{A}^{n}).
\end{equation*}

The zero and identity matrices over the max-algebra $\mathbb{R}_{\max}$ have the same form as in conventional linear algebra. The matrix and vector operations are performed in $\mathbb{R}_{\max}$ by the standard rules with the arithmetic addition replaced by the maximum operation. The regular vectors are positive vectors.

\subsection{Vector Inequalities}

We now describe preliminary results that play a key role in the solution of the bi-objective tropical optimization problem in the next section. First assume that, given a matrix $\bm{A}\in\mathbb{X}^{m\times n}$ and a vector $\bm{d}\in\mathbb{X}^{m}$, we need to solve, with respect to the unknown vector $\bm{x}\in\mathbb{X}^{n}$, the inequality
\begin{equation}
\bm{A}\bm{x}
\leq
\bm{d}.
\label{I-Axleqd}
\end{equation}

This inequality has a well-known solution that is available in various forms (see, e.g.,~\cite{Baccelli1993Synchronization}). In what follows, we use the solution provided by the following statement \cite{Krivulin2015Extremal}.
\begin{lemma}
\label{L-Axleqd}
For any column-regular matrix $\bm{A}$ and regular vector $\bm{d}$, all solutions of inequality \eqref{I-Axleqd} are given by the inequality $\bm{x}\leq(\bm{d}^{-}\bm{A})^{-}$.
\end{lemma}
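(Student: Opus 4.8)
The plan is to prove the two inclusions that establish the equivalence between $\bm{A}\bm{x} \leq \bm{d}$ and $\bm{x} \leq (\bm{d}^{-}\bm{A})^{-}$, exploiting the monotonicity of the semifield operations and the conjugate transposition. First I would unpack the notation: the matrix $\bm{A}$ is column-regular, so no column is identically $\mathbb{0}$; the vector $\bm{d}$ is regular, so $\bm{d}^{-}$ is a well-defined row vector with no zero entries; and the product $\bm{d}^{-}\bm{A}$ is a row vector whose $j$-th entry is $\bigoplus_{i} d_{i}^{-1} a_{ij}$, which is nonzero precisely because column $j$ of $\bm{A}$ has a nonzero entry, so $(\bm{d}^{-}\bm{A})^{-}$ is again a well-defined (column) vector.

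Next I would establish the forward direction: if $\bm{A}\bm{x} \leq \bm{d}$, then $\bm{x} \leq (\bm{d}^{-}\bm{A})^{-}$. The idea is to left-multiply the inequality $\bm{A}\bm{x} \leq \bm{d}$ by the row vector $\bm{d}^{-}$, using monotonicity of matrix multiplication, to get $\bm{d}^{-}\bm{A}\bm{x} \leq \bm{d}^{-}\bm{d}$. A short computation shows $\bm{d}^{-}\bm{d} = \bigoplus_{i} d_{i}^{-1} d_{i} = \mathbb{1}$ since $\bm{d}$ is regular. Hence $(\bm{d}^{-}\bm{A})\bm{x} \leq \mathbb{1}$, which is a scalar inequality of the form $\bm{u}\bm{x} \leq \mathbb{1}$ with $\bm{u} = \bm{d}^{-}\bm{A}$ a regular row vector. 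Written out, this says $\bigoplus_{j} u_{j} x_{j} \leq \mathbb{1}$; by the property that $x \oplus y \leq z$ is equivalent to $x \leq z$ and $y \leq z$, each term satisfies $u_{j} x_{j} \leq \mathbb{1}$, and multiplying by $u_{j}^{-1}$ gives $x_{j} \leq u_{j}^{-1} = (\bm{u}^{-})_{j}$, i.e.\ $\bm{x} \leq \bm{u}^{-} = (\bm{d}^{-}\bm{A})^{-}$.

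For the reverse direction, I would assume $\bm{x} \leq (\bm{d}^{-}\bm{A})^{-}$ and left-multiply by $\bm{A}$, using monotonicity, to obtain $\bm{A}\bm{x} \leq \bm{A}(\bm{d}^{-}\bm{A})^{-}$. It then remains to verify the pointwise bound $\bm{A}(\bm{d}^{-}\bm{A})^{-} \leq \bm{d}$. Expanding the $i$-th entry, this is $\bigoplus_{j} a_{ij}\big(\bigoplus_{k} d_{k}^{-1} a_{kj}\big)^{-1} \leq d_{i}$. For each fixed $j$, the inner sum dominates the $k=i$ term, so $\bigoplus_{k} d_{k}^{-1} a_{kj} \geq d_{i}^{-1} a_{ij}$; taking inverses reverses the inequality (exponent $-1 < 0$), giving $\big(\bigoplus_{k} d_{k}^{-1} a_{kj}\big)^{-1} \leq d_{i} a_{ij}^{-1}$ when $a_{ij} \ne \mathbb{0}$, and hence $a_{ij}\big(\bigoplus_{k} d_{k}^{-1} a_{kj}\big)^{-1} \leq d_{i}$; the case $a_{ij} = \mathbb{0}$ is trivial. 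Summing over $j$ preserves the bound, which completes the proof.

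I do not expect a genuine obstacle here — the argument is a routine application of monotonicity and the conjugation identities — but the one point requiring care is the handling of regularity: one must check at each step that the vectors being inverted have no zero entries (which is where column-regularity of $\bm{A}$ and regularity of $\bm{d}$ are used), and that the degenerate case $a_{ij} = \mathbb{0}$ in the reverse direction is dispatched separately rather than by dividing by zero. An alternative, slightly slicker route avoids the entrywise computation entirely: observe that $\bm{A}\bm{x} \leq \bm{d}$ iff $\bm{d}^{-}(\bm{A}\bm{x}) \leq \mathbb{1}$ iff $(\bm{d}^{-}\bm{A})\bm{x} \leq \mathbb{1}$, and then apply the known scalar-row-vector version of the result ($\bm{u}\bm{x} \leq \mathbb{1}$ iff $\bm{x} \leq \bm{u}^{-}$) to $\bm{u} = \bm{d}^{-}\bm{A}$; I would likely present the direct version since it is self-contained.
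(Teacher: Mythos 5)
Your proof is correct. Note, however, that the paper itself does not prove Lemma~\ref{L-Axleqd}: it is stated as a known result and cited from earlier work (\cite{Krivulin2015Extremal}, with a pointer to \cite{Baccelli1993Synchronization}), so there is no in-paper argument to compare against. On its own merits your argument is the standard residuation proof and it is sound: the forward direction correctly reduces $\bm{A}\bm{x}\leq\bm{d}$ to the scalar inequality $(\bm{d}^{-}\bm{A})\bm{x}\leq\mathbb{1}$ via $\bm{d}^{-}\bm{d}=\mathbb{1}$ and then splits the idempotent sum termwise, and the reverse direction correctly verifies $\bm{A}(\bm{d}^{-}\bm{A})^{-}\leq\bm{d}$ entrywise using the majority law $\bigoplus_{k}d_{k}^{-1}a_{kj}\geq d_{i}^{-1}a_{ij}$ and the antitonicity of inversion, with the degenerate case $a_{ij}=\mathbb{0}$ handled separately. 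You also correctly identify exactly where column-regularity of $\bm{A}$ and regularity of $\bm{d}$ are needed (to ensure $\bm{d}^{-}\bm{A}$ has no zero entries so that $(\bm{d}^{-}\bm{A})^{-}$ is a genuine entrywise inverse). No gaps.
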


Next suppose that, for a given square matrix $\bm{A}\in\mathbb{X}^{n\times n}$, we seek to find regular vectors $\bm{x}\in\mathbb{X}^{n}$ to satisfy the inequality
\begin{equation}
\bm{A}\bm{x}
\leq
\bm{x}.
\label{I-Axleqx}
\end{equation}

To solve the problem, we apply the next result, which is obtained in \cite{Krivulin2015Extremal,Krivulin2015Multidimensional} and offers a complete solution of the inequality in a parametric form.
\begin{theorem}
\label{T-Axleqx}
For any square matrix $\bm{A}$, the following statements hold.
\begin{enumerate}
\item
If $\mathop\mathrm{Tr}(\bm{A})\leq\mathbb{1}$, then all regular solutions to \eqref{I-Axleqx} are given by $\bm{x}=\bm{A}^{\ast}\bm{u}$, where $\bm{u}$ is any regular vector.
\item
If $\mathop\mathrm{Tr}(\bm{A})>\mathbb{1}$, then there is only the trivial solution $\bm{x}=\bm{0}$.
\end{enumerate}
\end{theorem}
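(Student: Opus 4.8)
The plan is to treat the two cases with a shared first step: iterating the inequality \eqref{I-Axleqx}. For a regular $\bm{x}$ with $\bm{A}\bm{x}\leq\bm{x}$, monotonicity of matrix multiplication gives $\bm{A}^{k}\bm{x}=\bm{A}(\bm{A}^{k-1}\bm{x})\leq\bm{A}\bm{x}\leq\bm{x}$ for every integer $k\geq1$, hence $\bm{A}^{k}\bm{x}\leq\bm{x}$ for all $k\geq0$. Looking at the $i$-th component and retaining only the diagonal term, $(\bm{A}^{k})_{ii}x_{i}\leq(\bm{A}^{k}\bm{x})_{i}\leq x_{i}$; since $x_{i}\neq\mathbb{0}$ this forces $(\bm{A}^{k})_{ii}\leq\mathbb{1}$ for all $i$, and therefore $\mathop\mathrm{tr}\bm{A}^{k}\leq\mathbb{1}$ for all $k$. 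Summing these over $k=1,\ldots,n$ yields $\mathop\mathrm{Tr}(\bm{A})\leq\mathbb{1}$. This already settles statement~2: if $\mathop\mathrm{Tr}(\bm{A})>\mathbb{1}$, no regular vector can solve \eqref{I-Axleqx}, so the only solution left is the trivial one $\bm{x}=\bm{0}$.

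For statement~1, assume $\mathop\mathrm{Tr}(\bm{A})\leq\mathbb{1}$, so that $\bm{A}^{\ast}$ is well defined. The necessity direction reuses the iteration above: adding the inequalities $\bm{A}^{k}\bm{x}\leq\bm{x}$ for $k=0,1,\ldots,n-1$ gives $\bm{A}^{\ast}\bm{x}=\bm{x}\oplus\bm{A}\bm{x}\oplus\cdots\oplus\bm{A}^{n-1}\bm{x}\leq\bm{x}$, while the identity term in $\bm{A}^{\ast}$ gives $\bm{x}\leq\bm{A}^{\ast}\bm{x}$; hence $\bm{A}^{\ast}\bm{x}=\bm{x}$, so every regular solution has the form $\bm{x}=\bm{A}^{\ast}\bm{u}$ with $\bm{u}=\bm{x}$ regular. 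For the sufficiency direction I would check that every $\bm{x}=\bm{A}^{\ast}\bm{u}$ with regular $\bm{u}$ works: it is regular because $\bm{A}^{\ast}\geq\bm{I}$ implies $\bm{x}\geq\bm{u}$, and it satisfies \eqref{I-Axleqx} because $\bm{A}\bm{x}=\bm{A}\bm{A}^{\ast}\bm{u}\leq\bm{A}^{\ast}\bm{u}=\bm{x}$, provided we have established $\bm{A}\bm{A}^{\ast}\leq\bm{A}^{\ast}$.

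I expect the only real obstacle to be this last inequality $\bm{A}\bm{A}^{\ast}\leq\bm{A}^{\ast}$, which, after discarding the common terms, amounts to $\bm{A}^{n}\leq\bm{I}\oplus\bm{A}\oplus\cdots\oplus\bm{A}^{n-1}$. I would prove it through the weighted-digraph interpretation of matrix powers: each entry of $\bm{A}^{k}$ is the largest weight of a walk of length $k$ between the corresponding vertices; the hypothesis $\mathop\mathrm{Tr}(\bm{A})\leq\mathbb{1}$ forces every simple cycle to have weight at most $\mathbb{1}$ (a simple cycle of length $k\leq n$ contributes to $\mathop\mathrm{tr}\bm{A}^{k}\leq\mathbb{1}$), hence so does every cycle; and any walk of length $n$ repeats a vertex, so it contains a cycle whose deletion produces a strictly shorter walk with the same endpoints and weight no smaller than the original. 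Passing to maxima over walks then gives $\bm{A}^{n}\leq\bm{A}^{\ast}$, and combining the necessity and sufficiency directions completes the proof of statement~1.
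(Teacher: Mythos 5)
The paper does not actually prove Theorem~\ref{T-Axleqx}: it is imported as a preliminary result from \cite{Krivulin2015Extremal,Krivulin2015Multidimensional}, so there is no in-paper argument to compare yours against. Taken on its own, your proof is the standard self-contained one and is correct. The iteration $\bm{A}^{k}\bm{x}\leq\bm{x}$ combined with regularity of $\bm{x}$ yields $\mathop\mathrm{tr}\bm{A}^{k}\leq\mathbb{1}$ and hence the necessity of $\mathop\mathrm{Tr}(\bm{A})\leq\mathbb{1}$; summing the iterates for $k=0,\ldots,n-1$ gives $\bm{A}^{\ast}\bm{x}=\bm{x}$, so every regular solution lies in the column span of $\bm{A}^{\ast}$; and the converse reduces, exactly as you say, to $\bm{A}\bm{A}^{\ast}\leq\bm{A}^{\ast}$, i.e. $\bm{A}^{n}\leq\bm{A}^{\ast}$. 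Your cycle-deletion argument for that inequality is sound in this setting: the order is assumed total, so $\oplus$ is a maximum and the entries of $\bm{A}^{k}$ are maxima of walk weights, every length-$n$ walk contains a (simple) cycle of weight at most $\mathbb{1}$ by the hypothesis $\mathop\mathrm{tr}\bm{A}^{k}\leq\mathbb{1}$ for $k\leq n$, and deleting it cannot decrease the walk weight. The one caveat concerns part~2: proving that no \emph{regular} solution exists does not literally give that $\bm{x}=\bm{0}$ is the only solution, since non-regular nonzero solutions can survive --- e.g.\ in $\mathbb{R}_{\max}$ the matrix $\bm{A}=\left(\begin{smallmatrix}2&0\\0&0\end{smallmatrix}\right)$ has $\mathop\mathrm{Tr}(\bm{A})=4>1$ yet admits the solution $(0,1)^{T}$. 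This imprecision is inherited from the quoted statement itself, and the paper only ever invokes part~2 in the form you actually establish (nonexistence of regular, i.e.\ positive, solutions), so it is a remark on the statement rather than a defect of your argument; still, you should not phrase the final step of part~2 as a deduction.
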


Note that this theorem describes all regular solutions, if exist, as the set of regular vectors (the linear span) generated by the columns in the Kleene star matrix $\bm{A}^{\ast}$.

\subsection{Identities for Traces}

We conclude the overview with binomial identities for matrices and their traces, which allow to simplify subsequent algebraic manipulations. We start with an identity that is valid for any matrices $\bm{A},\bm{B}\in\mathbb{X}^{n\times n}$ and integer $m>0$ in the following form (see also \cite{Krivulin2017Direct}):
\begin{equation*}
(\bm{A}\oplus\bm{B})^{m}
=
\bm{A}^{m}
\oplus
\bigoplus_{k=1}^{m}\bigoplus_{\substack{i_{0}+i_{1}+\cdots+i_{k}=m-k\\i_{0},i_{1},\ldots,i_{k}\geq0}}
\bm{A}^{i_{0}}(\bm{B}\bm{A}^{i_{1}}\cdots\bm{B}\bm{A}^{i_{k}}).
\end{equation*}

Taking the trace of both sides, using the permutation-invariant property of traces to change the order of matrix factors, and renaming the indices yield
\begin{equation*}
\mathop\mathrm{tr}(\bm{A}\oplus\bm{B})^{m}
=
\mathop\mathrm{tr}\bm{A}^{m}
\oplus
\bigoplus_{k=1}^{m}\bigoplus_{\substack{i_{1}+\cdots+i_{k}=m-k\\i_{1},\ldots,i_{k}\geq0}}
\mathop\mathrm{tr}(\bm{A}^{i_{1}}\bm{B}\cdots\bm{A}^{i_{k}}\bm{B}).
\end{equation*} 

After summing over $m=1,\ldots,n$, and rearranging terms, we have the identity
\begin{equation}
\mathop\mathrm{Tr}(\bm{A}\oplus\bm{B})
=
\mathop\mathrm{Tr}(\bm{A})
\oplus
\bigoplus_{k=1}^{n-1}
\bigoplus_{m=1}^{n-k}
\bigoplus_{\substack{i_{1}+\cdots+i_{k}=m\\i_{1},\ldots,i_{k}\geq0}}
\mathop\mathrm{tr}(\bm{A}^{i_{1}}\bm{B}\cdots\bm{A}^{i_{k}}\bm{B})
\oplus
\mathop\mathrm{Tr}(\bm{B}).
\label{E-TrAB}
\end{equation}

\section{Constrained Bi-Objective Optimization Problem}
\label{S-CBOOP}

In this section, we offer a complete Pareto-optimal solution to a constrained bi-objective optimization problem, which is formulated in terms of an arbitrary tropical semifield and solved under rather general conditions. Suppose that, given matrices $\bm{A},\bm{B},\bm{C}\in\mathbb{X}^{n\times n}$, the problem is to find regular vectors $\bm{x}\in\mathbb{X}^{n}$ that
\begin{equation}
\begin{aligned}
&
\text{minimize}
&&
(\bm{x}^{-}\bm{A}\bm{x},\ \bm{x}^{-}\bm{B}\bm{x});
\\
&
\text{subject to}
&&
\bm{C}\bm{x}
\leq
\bm{x}.
\label{P-minxAxxBx-Cxleqx}
\end{aligned}
\end{equation}

Before solving the problem, we note that, by Theorem~\ref{T-Axleqx}, the inequality constraint has nontrivial solutions only under the condition $\mathop\mathrm{Tr}(\bm{C})\leq\mathbb{1}$, which we have to take as a necessary assumption.

To describe the solution in a compact form, we use the following notation. For any matrices $\bm{A},\bm{B},\bm{C}\in\mathbb{X}^{n\times n}$, we denote the spectral radii of the matrices $\bm{A}$ and $\bm{B}$ as
\begin{equation}
\lambda
=
\bigoplus_{k=1}^{n}
\mathop\mathrm{tr}\nolimits^{1/k}(\bm{A}^{k}),
\qquad
\mu
=
\bigoplus_{k=1}^{n}
\mathop\mathrm{tr}\nolimits^{1/k}(\bm{B}^{k}),
\label{E-lambda-mu}
\end{equation}
and introduce the scalars
\begin{equation}
\begin{aligned}
\sigma
=
\bigoplus_{k=1}^{n-1}
\bigoplus_{m=1}^{n-k}
\bigoplus_{\substack{i_{1}+\cdots+i_{k}=m\\i_{1},\ldots,i_{k}\geq0}}
\mathop\mathrm{tr}\nolimits^{1/m}(\bm{A}^{i_{1}}\bm{C}\cdots\bm{A}^{i_{k}}\bm{C}),
\\
\theta
=
\bigoplus_{k=1}^{n-1}
\bigoplus_{m=1}^{n-k}
\bigoplus_{\substack{i_{1}+\cdots+i_{k}=m\\i_{1},\ldots,i_{k}\geq0}}
\mathop\mathrm{tr}\nolimits^{1/m}(\bm{B}^{i_{1}}\bm{C}\cdots\bm{B}^{i_{k}}\bm{C}).
\end{aligned}
\label{E-sigma-theta}
\end{equation}

Next, for any $k=1,\ldots,n-1$, $l=1,\ldots,k$ and $m=1,\ldots,n-k$, we denote
\begin{equation}
r_{k,l,m}
=
\bigoplus_{\substack{i_{1}+\cdots+i_{k}=m\\i_{1},\ldots,i_{k}\geq0}}
\bigoplus_{\substack{j_{1}+\cdots+j_{k}=l\\j_{1},\ldots,j_{k}\in\{0,1\}}}
\mathop\mathrm{tr}(\bm{A}^{i_{1}}\bm{B}^{j_{1}}\bm{C}^{1-j_{1}}\cdots\bm{A}^{i_{k}}\bm{B}^{j_{k}}\bm{C}^{1-j_{k}}).
\label{E-rklm}
\end{equation}

Finally, we define, for any $s,t>\mathbb{0}$, the functions
\begin{equation}
G(s)
=
\bigoplus_{k=1}^{n-1}
\bigoplus_{m=1}^{n-k}
\bigoplus_{l=1}^{k}
r_{k,l,m}^{1/l}
s^{-m/l},
\qquad
H(t)
=
\bigoplus_{k=1}^{n-1}
\bigoplus_{m=1}^{n-k}
\bigoplus_{l=1}^{k}
r_{k,l,m}^{1/m}
t^{-l/m}.
\label{E-Gs-Ht}
\end{equation}

We observe that both functions $G$ and $H$ monotonically decrease as their arguments increase. Moreover, the equality $G(s)=t$ is equivalent to $H(t)=s$, and thus these functions are inverse to each other. Suppose the equality $G(s)=t$ is valid. This equality is equivalent to the system
\begin{equation*}
r_{k,l,m}^{1/l}
s^{-m/l}
\leq
t,
\qquad
l=1,\ldots,k;
\quad
m=1,\ldots,n-k;
\quad
k=1,\ldots,n-1,
\end{equation*}
where at least one inequality holds as an equality. The solution of these inequalities for $s$~yields
\begin{equation*}
r_{k,l,m}^{1/m}
t^{-l/m}
\leq
s,
\qquad
l=1,\ldots,k;
\quad
m=1,\ldots,n-k;
\quad
k=1,\ldots,n-1.
\end{equation*}

By combining these inequalities, one of which is an equality, we obtain $H(t)=s$.

As a consequence, we also see that the inequalities $G(s)\leq t$ and $H(t)\leq s$ are dual in the sense that the solution of one of them is given by the other and vice versa.

We are now in a position to formulate and proof the main result of the paper.

\begin{theorem}
\label{T-minxAxxBx-Cxleqx}
Let $\bm{A}$ and $\bm{B}$ be matrices with respective spectral radii $\lambda>\mathbb{0}$ and $\mu>\mathbb{0}$, and $\bm{C}$ be a matrix with $\mathop\mathrm{Tr}(\bm{C})\leq\mathbb{1}$. Then, the following statements hold.
\begin{enumerate}
\item
If $H(\mu\oplus\theta)\leq\lambda\oplus\sigma$, then the Pareto frontier of problem \eqref{P-minxAxxBx-Cxleqx} reduces to the point $(\alpha,\beta)$ with
\begin{equation*}
\alpha
=
\lambda\oplus\sigma,
\qquad
\beta
=
\mu\oplus\theta.
\end{equation*}
\item
If $H(\mu\oplus\theta)>\lambda\oplus\sigma$, then the Pareto frontier is a segment defined by
\begin{equation*}
\lambda\oplus\sigma
\leq
\alpha
\leq
H(\mu\oplus\theta),
\qquad
\beta
=
G(\alpha).
\end{equation*}
\item
All Pareto-optimal solutions of problem \eqref{P-minxAxxBx-Cxleqx} are given in parametric form by
\begin{equation*}
\bm{x}
=
(\alpha^{-1}\bm{A}
\oplus
\beta^{-1}\bm{B}
\oplus
\bm{C})^{\ast}
\bm{u},
\qquad
\bm{u}
>
\bm{0}.
%\label{E-xalpha1Abeta1BCu}
\end{equation*}
\end{enumerate} 
\end{theorem}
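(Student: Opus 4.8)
The plan is to reduce the bi-objective problem to a parametric family of constrained single-objective feasibility problems, using the scalarization via the Pareto frontier that the paper has been setting up. For fixed target values $\alpha,\beta>\mathbb{0}$, the two objectives $\bm{x}^{-}\bm{A}\bm{x}\leq\alpha$ and $\bm{x}^{-}\bm{B}\bm{x}\leq\beta$ together with the constraint $\bm{C}\bm{x}\leq\bm{x}$ can be rewritten, by dividing through by $\alpha$ and $\beta$ and by the standard manipulation turning $\bm{x}^{-}\bm{A}\bm{x}\leq\alpha$ into $\alpha^{-1}\bm{A}\bm{x}\leq\bm{x}$ (for regular $\bm{x}$, via Lemma~\ref{L-Axleqd} applied in the form $\bm{A}\bm{x}\leq\alpha\bm{x}$), as the single inequality $(\alpha^{-1}\bm{A}\oplus\beta^{-1}\bm{B}\oplus\bm{C})\bm{x}\leq\bm{x}$. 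By Theorem~\ref{T-Axleqx}, this has a nontrivial regular solution if and only if $\mathop\mathrm{Tr}(\alpha^{-1}\bm{A}\oplus\beta^{-1}\bm{B}\oplus\bm{C})\leq\mathbb{1}$, and in that case all regular solutions are exactly $\bm{x}=(\alpha^{-1}\bm{A}\oplus\beta^{-1}\bm{B}\oplus\bm{C})^{\ast}\bm{u}$ with $\bm{u}>\bm{0}$. This already gives the parametric form in statement (3), provided we know which pairs $(\alpha,\beta)$ lie on the Pareto frontier.

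The core computational step is therefore to expand the feasibility condition $\mathop\mathrm{Tr}(\alpha^{-1}\bm{A}\oplus\beta^{-1}\bm{B}\oplus\bm{C})\leq\mathbb{1}$ into an explicit inequality in $\alpha$ and $\beta$. First I would apply the ternary version of identity \eqref{E-TrAB} (obtained by iterating the binomial trace identity, or by grouping $\bm{B}\oplus\bm{C}$ first), expanding $\mathop\mathrm{Tr}$ of a sum of three matrices into a sum of traces of all words in $\bm{A}^{i},\bm{B}^{j},\bm{C}^{1-j}$-type blocks. The powers of $\alpha^{-1}$ and $\beta^{-1}$ attached to each term are governed respectively by the total $\bm{A}$-degree and total $\bm{B}$-degree of the word, which is precisely why the scalars $\lambda,\mu$ from \eqref{E-lambda-mu}, $\sigma,\theta$ from \eqref{E-sigma-theta}, and the mixed coefficients $r_{k,l,m}$ from \eqref{E-rklm} were defined the way they were: the pure-$\bm{A}$ words give $\lambda$, the pure-$\bm{B}$ words give $\mu$, the $\bm{A}$-$\bm{C}$ words (no $\bm{B}$) give $\sigma$, the $\bm{B}$-$\bm{C}$ words (no $\bm{A}$) give $\theta$, and the genuinely mixed words (with $\bm{A}$-degree $m$, $\bm{B}$-count $l$, arranged in $k$ blocks) contribute $r_{k,l,m}\alpha^{-m}\beta^{-l}$. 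Collecting, taking $1/k$-th roots where the word has length $k$ (to match the $\mathop\mathrm{Tr}^{1/k}$ normalization implicit in controlling $\mathop\mathrm{Tr}\leq\mathbb{1}$), and using that $x\oplus y\leq\mathbb{1}$ iff $x\leq\mathbb{1}$ and $y\leq\mathbb{1}$, the condition becomes the conjunction of $\lambda\oplus\sigma\leq\alpha$, $\mu\oplus\theta\leq\beta$, and $r_{k,l,m}^{1/m}$-type inequalities $r_{k,l,m}\leq\alpha^{m}\beta^{l}$ for all admissible $k,l,m$. The last family is exactly $G(\alpha)\leq\beta$ (equivalently $H(\beta)\leq\alpha$) by the definition \eqref{E-Gs-Ht} and the duality observed just before the theorem.

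From here the Pareto analysis is geometric. The feasible region in the $(\alpha,\beta)$-plane is $\{\alpha\geq\lambda\oplus\sigma,\ \beta\geq\mu\oplus\theta,\ \beta\geq G(\alpha)\}$; since $G$ is strictly decreasing and continuous (being a finite $\oplus$ of power functions) with inverse $H$, its Pareto-minimal boundary is the lower-left envelope of the three constraints. If at $\alpha=\lambda\oplus\sigma$ the curve constraint is already inactive, i.e. $G(\lambda\oplus\sigma)\leq\mu\oplus\theta$ — equivalently, by duality, $H(\mu\oplus\theta)\leq\lambda\oplus\sigma$ — then the unique non-dominated point is the corner $(\lambda\oplus\sigma,\mu\oplus\theta)$, giving statement (1). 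Otherwise the frontier runs along $\beta=G(\alpha)$ from the point where $\beta=\mu\oplus\theta$, i.e. $\alpha=H(\mu\oplus\theta)$, down to the point where $\alpha=\lambda\oplus\sigma$; on this arc decreasing $\alpha$ forces $\beta$ up and vice versa, so every point is non-dominated, which is statement (2). Finally, for each $(\alpha,\beta)$ on the frontier the set of optimal $\bm{x}$ is the solution set of the feasibility inequality, already computed via Theorem~\ref{T-Axleqx} as $(\alpha^{-1}\bm{A}\oplus\beta^{-1}\bm{B}\oplus\bm{C})^{\ast}\bm{u}$, $\bm{u}>\bm{0}$; taking the union over the frontier yields statement (3).

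The main obstacle I anticipate is the bookkeeping in the trace expansion: one must show carefully that after grouping the three-matrix binomial expansion, the $\alpha$- and $\beta$-exponents pair up correctly with the $1/k$ normalization so that $\mathop\mathrm{Tr}\leq\mathbb{1}$ splits cleanly into the stated four groups of inequalities, with no missing or spurious terms and with the exponent $m$ (for $\alpha$) and $l$ (for $\bm{B}$-count) correctly ranging over $1\le m\le n-k$, $1\le l\le k$. A secondary subtlety is justifying the "at least one inequality is an equality" phrasing — i.e. that points strictly inside where all curve-constraints are slack are dominated — which is where one invokes $\lambda,\mu>\mathbb{0}$ to guarantee the objectives are bounded below by positive quantities and hence that decreasing a slack $\alpha$ strictly improves the first objective without hurting the second; everything else is routine monotonicity and the already-established $G$–$H$ duality.
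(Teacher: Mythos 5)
Your proposal follows essentially the same route as the paper's proof: reduce to the parametrized inequality $(\alpha^{-1}\bm{A}\oplus\beta^{-1}\bm{B}\oplus\bm{C})\bm{x}\leq\bm{x}$, apply Theorem~\ref{T-Axleqx} to get both the solvability condition $\mathop\mathrm{Tr}(\cdot)\leq\mathbb{1}$ and the Kleene-star parametrization, expand the trace via \eqref{E-TrAB} (the paper likewise groups $\beta^{-1}\bm{B}\oplus\bm{C}$ first) into exactly the four families $\alpha\geq\lambda\oplus\sigma$, $\beta\geq\mu\oplus\theta$, $\beta\geq G(\alpha)$ plus the assumed $\mathop\mathrm{Tr}(\bm{C})\leq\mathbb{1}$, and then read off the Pareto frontier from the lower-left envelope using the $G$--$H$ duality. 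The argument is correct and the anticipated bookkeeping matches what the paper actually carries out.
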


\begin{proof}
Fix an arbitrary solution vector $\bm{x}$ and denote the corresponding values of the objective functions $\bm{x}^{-}\bm{A}\bm{x}$ and $\bm{x}^{-}\bm{B}\bm{x}$ in the Pareto frontier of the problem by $\alpha$ and $\beta$ respectively. The solution $\bm{x}$ is then given by the parametrized system
\begin{equation*}
\bm{x}^{-}\bm{A}\bm{x}
=
\alpha,
\qquad
\bm{x}^{-}\bm{B}\bm{x}
=
\beta,
\qquad
\bm{C}\bm{x}
\leq
\bm{x}.
\end{equation*}

Since the parameters $\alpha$ and $\beta$ are assumed to take minimum values that cannot be improved, the set of solution vectors $\bm{x}$ does not change if the equalities are replaced by the inequalities $\bm{x}^{-}\bm{A}\bm{x}\leq\alpha$ and $\bm{x}^{-}\bm{B}\bm{x}\leq\beta$. Furthermore, we use Lemma~\ref{L-Axleqd} to solve the first inequality with respect to $\bm{A}\bm{x}$ and the second with respect to $\bm{B}\bm{x}$, and then rearrange the parameters. The system now becomes
\begin{equation*}
\alpha^{-1}\bm{A}\bm{x}
\leq
\bm{x},
\qquad
\beta^{-1}\bm{B}\bm{x}
\leq
\bm{x},
\qquad
\bm{C}\bm{x}
\leq
\bm{x}.
\end{equation*}

Finally, we combine the inequalities of the system into one parametrized inequality
\begin{equation*}
(\alpha^{-1}\bm{A}
\oplus
\beta^{-1}\bm{B}
\oplus
\bm{C})
\bm{x}
\leq
\bm{x}.
\end{equation*}

By Theorem~\ref{T-Axleqx}, this inequality has regular solutions if and only if the following condition holds:
\begin{equation}
\mathop\mathrm{Tr}(\alpha^{-1}\bm{A}\oplus\beta^{-1}\bm{B}\oplus\bm{C})
\leq
\mathbb{1}.
\label{I-Tralpha1Abeta1BCleq1}
\end{equation}

Under this condition, all regular solutions are given in the parametric form
\begin{equation}
\bm{x}
=
(\alpha^{-1}\bm{A}
\oplus
\beta^{-1}\bm{B}
\oplus
\bm{C})^{\ast}
\bm{u},
\qquad
\bm{u}
>
\bm{0}.
\label{E-xalpha1Abeta1BCu}
\end{equation}

Consider the existence condition at \eqref{I-Tralpha1Abeta1BCleq1} and apply identity \eqref{E-TrAB} to expand the left-hand side as 
\begin{multline*}
\mathop\mathrm{Tr}(\alpha^{-1}\bm{A}\oplus\beta^{-1}\bm{B}\oplus\bm{C})
=
\bigoplus_{k=1}^{n}
\alpha^{-k}\mathop\mathrm{tr}\bm{A}^{k}
\oplus
\mathop\mathrm{Tr}(\beta^{-1}\bm{B}\oplus\bm{C})
\\\oplus
\bigoplus_{k=1}^{n-1}
\bigoplus_{m=1}^{n-k}
\bigoplus_{\substack{i_{1}+\cdots+i_{k}=m\\i_{1},\ldots,i_{k}\geq0}}
\alpha^{-m}
\mathop\mathrm{tr}(\bm{A}^{i_{1}}(\beta^{-1}\bm{B}\oplus\bm{C})\cdots\bm{A}^{i_{k}}(\beta^{-1}\bm{B}\oplus\bm{C})).
\end{multline*} 

Then, the existence condition is equivalent to the system of inequalities
\begin{equation}
\begin{aligned}
\bigoplus_{k=1}^{n}
\alpha^{-k}\mathop\mathrm{tr}\bm{A}^{k}
\leq
\mathbb{1},
%\label{I-alphaktrAk}
\qquad
\mathop\mathrm{Tr}(\beta^{-1}\bm{B}\oplus\bm{C})
\leq
\mathbb{1},
%\label{I-Trbeta1BC}
\\
\bigoplus_{k=1}^{n-1}
\bigoplus_{m=1}^{n-k}
\bigoplus_{\substack{i_{1}+\cdots+i_{k}=m\\i_{1},\ldots,i_{k}\geq0}}
\alpha^{-m}
\mathop\mathrm{tr}(\bm{A}^{i_{1}}(\beta^{-1}\bm{B}\oplus\bm{C})\cdots\bm{A}^{i_{k}}(\beta^{-1}\bm{B}\oplus\bm{C}))
\leq
\mathbb{1}.
%\label{I-alphamtrAi1beta1BTrbeta1BCAikbeta1BC}
\end{aligned}
\label{I-alpha-beta}
\end{equation}

We now solve these inequalities with respect to the parameters $\alpha$ and $\beta$. The first inequality can be replaced by the inequalities
\begin{equation*}
\alpha^{-k}\mathop\mathrm{tr}\bm{A}^{k}
\leq
\mathbb{1},
\qquad
k=1,\ldots,n.
\end{equation*}

After solving these inequalities for $\alpha$ and combining the results, we have
\begin{equation*}
\alpha
\geq
\bigoplus_{k=1}^{n}
\mathop\mathrm{tr}\nolimits^{1/k}(\bm{A}^{k})
=
\lambda.
\end{equation*}
%where the right-hand side is equal to the spectral radius of the matrix $\bm{A}$.

Next, we examine the second inequality at \eqref{I-alpha-beta}. An application of \eqref{E-TrAB} to the left-hand side yields 
\begin{multline*}
\mathop\mathrm{Tr}(\beta^{-1}\bm{B}\oplus\bm{C})
=
\bigoplus_{k=1}^{n}
\beta^{-k}\mathop\mathrm{tr}\bm{B}^{k}
\\\oplus
\bigoplus_{k=1}^{n-1}
\bigoplus_{m=1}^{n-k}
\bigoplus_{\substack{i_{1}+\cdots+i_{k}=m\\i_{1},\ldots,i_{k}\geq0}}
\beta^{-m}
\mathop\mathrm{tr}(\bm{B}^{i_{1}}\bm{C}\cdots\bm{B}^{i_{k}}\bm{C})
\oplus
\mathop\mathrm{Tr}(\bm{C}).
\end{multline*}

The inequality under examination reduces to the system
\begin{equation*}
\begin{aligned}
\bigoplus_{k=1}^{n}
\beta^{-k}\mathop\mathrm{tr}\bm{B}^{k}
\leq
\mathbb{1},
\qquad
\mathop\mathrm{Tr}(\bm{C})
\leq
\mathbb{1},
\\
\bigoplus_{k=1}^{n-1}
\bigoplus_{m=1}^{n-k}
\bigoplus_{\substack{i_{1}+\cdots+i_{k}=m\\i_{1},\ldots,i_{k}\geq0}}
\beta^{-m}
\mathop\mathrm{tr}(\bm{B}^{i_{1}}\bm{C}\cdots\bm{B}^{i_{k}}\bm{C})
\leq
\mathbb{1},
\end{aligned}
\end{equation*}
where the inequality $\mathop\mathrm{Tr}(\bm{C})\leq\mathbb{1}$ holds by the assumption of the theorem.

Solving the first and third inequality in the same way as above, we obtain
\begin{align*}
\beta
&\geq
\bigoplus_{k=1}^{n}
\mathop\mathrm{tr}\nolimits^{1/k}(\bm{B}^{k})
=
\mu,
\\
\beta
&\geq
\bigoplus_{k=1}^{n-1}
\bigoplus_{m=1}^{n-k}
\bigoplus_{\substack{i_{1}+\cdots+i_{k}=m\\i_{1},\ldots,i_{k}\geq0}}
\mathop\mathrm{tr}\nolimits^{1/m}(\bm{B}^{i_{1}}\bm{C}\cdots\bm{B}^{i_{k}}\bm{C})
=
\theta.
\end{align*}

Finally, we consider the matrix product $\bm{A}^{i_{1}}(\beta^{-1}\bm{B}\oplus\bm{C})\cdots\bm{A}^{i_{k}}(\beta^{-1}\bm{B}\oplus\bm{C})$ under the trace operator on the left-hand side of the third inequality at \eqref{I-alpha-beta}. We expand the product by taking the first and then the second summand in each term $\beta^{-1}\bm{B}\oplus\bm{C}$ to obtain the sum of products $\beta^{-(j_{1}+\cdots+j_{k})}(\bm{A}^{i_{1}}\bm{B}^{j_{1}}\bm{C}^{1-j_{1}}\cdots\bm{A}^{i_{k}}\bm{B}^{j_{k}}\bm{C}^{1-j_{k}})$ for all $j_{1},\ldots,j_{k}\in\{0,1\}$ such that $0\leq j_{1}+\cdots+j_{k}\leq k$.

The trace under summation on the left-hand side of the inequality takes the form
\begin{multline*}
\mathop\mathrm{tr}(\bm{A}^{i_{1}}(\beta^{-1}\bm{B}\oplus\bm{C})\cdots\bm{A}^{i_{k}}(\beta^{-1}\bm{B}\oplus\bm{C}))
=
\mathop\mathrm{tr}(\bm{A}^{i_{1}}\bm{C}\cdots\bm{A}^{i_{k}}\bm{C})
\\\oplus
\bigoplus_{l=1}^{k}
\bigoplus_{\substack{j_{1}+\cdots+j_{k}=l\\j_{1},\ldots,j_{k}\in\{0,1\}}}
\beta^{-l}
\mathop\mathrm{tr}(\bm{A}^{i_{1}}\bm{B}^{j_{1}}\bm{C}^{1-j_{1}}\cdots\bm{A}^{i_{k}}\bm{B}^{j_{k}}\bm{C}^{1-j_{k}}).
\end{multline*}

After substitution of this expression, we use the symbol $r_{k,l,m}$ to rewrite the inequality as the system of two inequalities
\begin{align*}
\bigoplus_{k=1}^{n-1}
\bigoplus_{m=1}^{n-k}
\bigoplus_{\substack{i_{1}+\cdots+i_{k}=m\\i_{1},\ldots,i_{k}\geq0}}
\alpha^{-m}
\mathop\mathrm{tr}(\bm{A}^{i_{1}}\bm{C}\cdots\bm{A}^{i_{k}}\bm{C})
&\leq
\mathbb{1},
\\
\bigoplus_{k=1}^{n-1}
\bigoplus_{m=1}^{n-k}
\bigoplus_{l=1}^{k}
r_{k,l,m}
\alpha^{-m}
\beta^{-l}
&\leq
\mathbb{1}.
\end{align*}

The solution of the first inequality with respect to $\alpha$ and the second to $\beta$ yields
\begin{align*}
\alpha
&\geq
\bigoplus_{k=1}^{n-1}
\bigoplus_{m=1}^{n-k}
\bigoplus_{\substack{i_{1}+\cdots+i_{k}=m\\i_{1},\ldots,i_{k}\geq0}}
\mathop\mathrm{tr}\nolimits^{1/m}(\bm{A}^{i_{1}}\bm{C}\cdots\bm{A}^{i_{k}}\bm{C})
=
\sigma,
\\
\beta
&\geq
\bigoplus_{k=1}^{n-1}
\bigoplus_{m=1}^{n-k}
\bigoplus_{l=1}^{k}
r_{k,l,m}^{1/l}
\alpha^{-m/l}
=
G(\alpha).
\end{align*}

Finally, we combine all inequalities obtained for $\alpha$ and $\beta$ into the following system:
\begin{equation}
\alpha
\geq
\lambda
\oplus
\sigma,
\qquad
\beta
\geq
\mu
\oplus
\theta
\oplus
G(\alpha).
\label{I-alphageqlambdasigma-betageqmutheta}
\end{equation}

Taking into account that $G(\alpha)$ decreases as $\alpha$ increases, we see that this system defines an area on the $\alpha\beta$-plane that is bounded from the left and from below by the lines $\alpha=
\lambda\oplus\sigma$ and $\beta=\mu\oplus\theta$, and lies above the graph of the function $\beta=G(\alpha)$.

To describe the Pareto frontier for the problem, we note that any interior point of the area can be improved and hence cannot belong to the frontier. For the same reason, the left and bottom boundary half-lines cannot be parts of the frontier, except for their bottommost and leftmost points. Both points may coincide in the single point $(\lambda\oplus\sigma,\mu\oplus\theta)$ if the graph of $G(\alpha)$ lies below this point, or be the ends of a segment that is cut out from the graph by the lines $\alpha=\lambda\oplus\sigma$ and $\beta=\mu\oplus\theta$ otherwise.

An illustration is given in Figure~\ref{F-EPF}, where the frontier is indicated by a thick dot (left) or depicted by a thick segment between two thick dots of a curve (right).
%%%\end{paracol}
%%%\nointerlineskip
\begin{figure}[ht]
\begin{tikzpicture}

\begin{axis}[
axis lines=none,
]

\addplot[
black,
line width=1.0pt,
domain=1:50,
y domain=1:10,
]%{max(1/x^(3/2),1/x,1/x^(1/2)};
{max(1/x^(3/2),1/x,1/x^(1/2)/50};
\end{axis}

%\draw (0.5,0.5) -- (6.5,0.5) -- (6.5,6.5) -- (0.5,6.5) -- (0.5,0.5);
\draw (0.5,0.4) -- (6.5,0.4) -- (6.5,6.4) -- (0.5,6.4) -- (0.5,0.4);
\node at (0.0,3.5) {$\beta$};
\node at (3.5,0.0) {$\alpha$};

%\draw [thick] (2.0,0.5) -- (2.0,6.5);
\draw [thick] (3.0,0.4) -- (3.0,6.4);
\draw [thick] (0.5,1.6) -- (6.5,1.6);

%\draw [thick] (0.5,4.5) -- (1.0, 2.0) -- (2.5,1.0) -- (4.5,0.5);

\fill (3.0,1.6) circle (2pt);

\node at (4.0,6.0) {$\alpha=\lambda\oplus\sigma$};

\node at (5.5,1.9) {$\beta=\mu\oplus\theta$};

\node at (1.8,1.9) {$H(\mu\oplus\theta)$};

\node at (4.5,0.9) {$\beta=G(\alpha)$};

\end{tikzpicture}
\hspace{1mm}
\begin{tikzpicture}

\begin{axis}[
axis lines=none,
]

\addplot[
black,
line width=1.0pt,
domain=1:4,
y domain=1:10,
]%{max(1/x^(3/2),1/x,1/x^(1/2)};
{max(1/x^(4),1/x,1/x^(1/3)/8};

\addplot[
black,
line width=1.75pt,
domain=1.5:2.05,
y domain=1:10,
]%{max(1/x^(3/2),1/x,1/x^(1/2)};
{max(1/x^(4),1/x,1/x^(1/3)/8};

\end{axis}

%\draw (0.5,0.5) -- (6.5,0.5) -- (6.5,6.5) -- (0.5,6.5) -- (0.5,0.5);
\draw (0.5,0.4) -- (6.5,0.4) -- (6.5,6.4) -- (0.5,6.4) -- (0.5,0.4);
\node at (0.0,3.5) {$\beta$};
\node at (3.5,0.0) {$\alpha$};

%\draw [thick] (1.5,0.5) -- (1.5,6.5);
\draw [thick] (1.5,0.4) -- (1.5,6.4);
\draw [thick] (0.5,2.0) -- (6.5,2.0);

%\draw [thick] (1.0,5.5) -- (1.3,3.5) -- (2.0,2.3) -- (3.5,1.0) -- (5.5,0.5);
%\draw [thick] (1.0,5.5) -- (1.3,3.5) -- (1.9,2.3) -- (3.5,1.0) -- (5.5,0.5);

\fill (1.5,3.15) circle (2pt);
\fill (2.55,2.0) circle (2pt);

%\fill (1.9,2.3) circle (2pt);

%\draw [ultra thick] (1.5,3.1) -- (1.9,2.3);
%\draw [ultra thick] (1.9,2.3) -- (2.27,2.0);

\node at (2.5,6.0) {$\alpha=\lambda\oplus\sigma$};

\node at (5.5,2.3) {$\beta=\mu\oplus\theta$};

\node at (3.45,2.3) {$H(\mu\oplus\theta)$};

\node at (2.9,0.9) {$\beta=G(\alpha)$};
\end{tikzpicture}
\caption{Examples of Pareto frontier in the form of a single point (\textbf{left}), and of a segment (\textbf{right}).}
\label{F-EPF}
\end{figure}
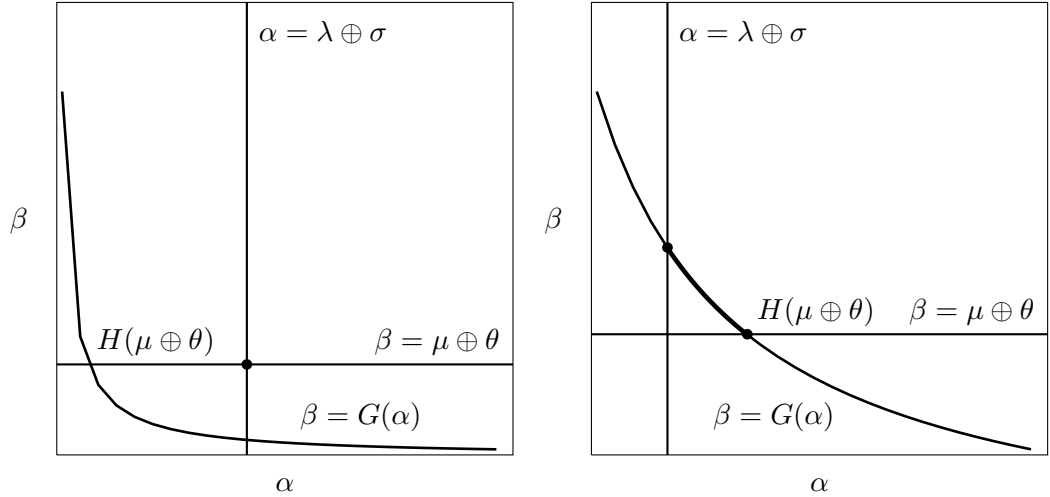
%%%\begin{paracol}{2}
%%%%\linenumbers
%%%\switchcolumn

We consider two cases and initially suppose the following condition holds:
\begin{equation*}
H(\mu\oplus\theta)
\leq
\lambda\oplus\sigma.
\end{equation*}

This condition and the first inequality at \eqref{I-alphageqlambdasigma-betageqmutheta} lead to the inequality $H(\mu\oplus\theta)\leq\alpha$, which is equivalent to the dual inequality $\mu\oplus\theta\geq G(\alpha)$. The second inequality at \eqref{I-alphageqlambdasigma-betageqmutheta} reduces to $\beta\geq\mu\oplus\theta$. As a result, the Pareto frontier shrinks to a single point where the parameters are fixed at
\begin{equation*}
\alpha
=
\lambda\oplus\sigma,
\qquad
\beta
=
\mu\oplus\theta.
\end{equation*}

Substitution of these values for the parameters into \eqref{E-xalpha1Abeta1BCu} yields all corresponding Pareto-optimal solutions of the problem in this case.

We now examine the case that
\begin{equation*}
H(\mu\oplus\theta)
>
\lambda\oplus\sigma.
\end{equation*}

Assume the parameter $\alpha$ to satisfy the inequality $\lambda\oplus\sigma\leq\alpha<H(\mu\oplus\theta)$, and note that the condition $\alpha<H(\mu\oplus\theta)$ results in the inequality $\mu\oplus\theta<G(\alpha)$. Indeed, if the opposite inequality $\mu\oplus\theta\geq G(\alpha)$ holds, then $\alpha\geq H(\mu\oplus\theta)$, which is a contradiction.

Since the second inequality at \eqref{I-alphageqlambdasigma-betageqmutheta} then becomes $\beta\geq G(\alpha)$, the Pareto frontier forms a non-linear segment given by $\lambda\oplus\sigma\leq\alpha<H(\mu\oplus\theta)$ and $\beta=G(\alpha)$.

For all $\alpha\geq H(\mu\oplus\theta)$, the equality $\mu\oplus\theta\geq G(\alpha)$ holds, and thus $\beta\geq\mu\oplus\theta$. The Pareto frontier degenerates to the point with $\alpha=H(\mu\oplus\theta)$ and $\beta=\mu\oplus\theta$.

Observing that $G(\alpha)=G(H(\mu\oplus\theta))=\mu\oplus\theta$, we combine the results to represent the frontier as 
\begin{equation*}
\lambda\oplus\sigma
\leq
\alpha
\leq
H(\mu\oplus\theta),
\qquad
\beta
=
G(\alpha).
\end{equation*}

A complete Pareto-optimal solution takes the general form defined by \eqref{E-xalpha1Abeta1BCu}. 
%\qed
\end{proof}
 
Let us set $\bm{C}=\bm{0}$ in problem \eqref{P-minxAxxBx-Cxleqx}, which makes the constraint $\bm{C}\bm{x}\leq\bm{x}$ trivially hold for any $\bm{x}$. As a result, this problem becomes the unconstrained problem
\begin{equation}
\begin{aligned}
&
\text{minimize}
&&
(\bm{x}^{-}\bm{A}\bm{x},\ \bm{x}^{-}\bm{B}\bm{x}).
\label{P-minxAxxBx}
\end{aligned}
\end{equation}

A complete solution of problem \eqref{P-minxAxxBx} can be obtained as a formal consequence of the solution of \eqref{P-minxAxxBx-Cxleqx} as follows. First note that, under the condition $\bm{C}=\bm{0}$, we have $\sigma=\mathbb{0}$ and $\theta=\mathbb{0}$.

Furthermore, we see that $r_{k,l,m}\ne\mathbb{0}$ only when $l=k$. Then, for any $m=1,\ldots,n-k$ and $k=1,\ldots,n$, we introduce $r_{k,m}$ equal to $r_{k,k,m}$, and write 
\begin{equation*}
r_{k,m}
=
\bigoplus_{\substack{i_{1}+\cdots+i_{k}=m\\i_{1},\ldots,i_{k}\geq0}}
\mathop\mathrm{tr}(\bm{A}^{i_{1}}\bm{B}\cdots\bm{A}^{i_{k}}\bm{B}).
\end{equation*}

Finally, we redefine, for any $s,t>\mathbb{0}$, the functions
\begin{equation*}
G(s)
=
\bigoplus_{k=1}^{n-1}
\bigoplus_{m=1}^{n-k}
r_{k,m}^{1/k}
s^{-m/k},
\qquad
H(t)
=
\bigoplus_{k=1}^{n-1}
\bigoplus_{m=1}^{n-k}
r_{k,m}^{1/m}
t^{-k/m}.
\end{equation*}

With the condition $\bm{C}=\bm{0}$ and new notation, the solution of problem \eqref{P-minxAxxBx-Cxleqx} given by Theorem~\ref{T-minxAxxBx-Cxleqx} reduces to a solution of \eqref{P-minxAxxBx} in the form of the next result.
\begin{corollary}
\label{C-minxAxxBx}
Let $\bm{A}$ and $\bm{B}$ be matrices with respective spectral radii $\lambda>\mathbb{0}$ and $\mu>\mathbb{0}$. Then, the following statements hold.
\begin{enumerate}
\item
If $H(\mu)\leq\lambda$, then the Pareto frontier of problem \eqref{P-minxAxxBx} is a point $(\alpha,\beta)$ with
\begin{equation*}
\alpha
=
\lambda,
\qquad
\beta
=
\mu.
\end{equation*}
\item
If $H(\mu)>\lambda$, then the Pareto frontier is a segment defined by
\begin{equation*}
\lambda
\leq
\alpha
\leq
H(\mu),
\qquad
\beta
=
G(\alpha).
\end{equation*}
\item
All Pareto-optimal solutions of problem \eqref{P-minxAxxBx} are given in parametric form by
\begin{equation*}
\bm{x}
=
(\alpha^{-1}\bm{A}
\oplus
\beta^{-1}\bm{B})^{\ast}
\bm{u},
\qquad
\bm{u}
>
\bm{0}.
%\label{E-xalpha1Abeta1B}
\end{equation*}
\end{enumerate} 
\end{corollary}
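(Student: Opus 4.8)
The plan is to obtain the corollary purely as a specialization of Theorem~\ref{T-minxAxxBx-Cxleqx} to the case $\bm{C}=\bm{0}$, checking that every ingredient of the general solution collapses to the asserted form. First I would note that $\bm{C}=\bm{0}$ makes the constraint $\bm{C}\bm{x}\leq\bm{x}$ hold for every $\bm{x}$, so that problem~\eqref{P-minxAxxBx-Cxleqx} indeed becomes the unconstrained problem~\eqref{P-minxAxxBx}; moreover $\mathop\mathrm{Tr}(\bm{0})=\mathbb{0}\leq\mathbb{1}$, so the hypothesis of Theorem~\ref{T-minxAxxBx-Cxleqx} on $\bm{C}$ is met, while the standing assumptions $\lambda>\mathbb{0}$ and $\mu>\mathbb{0}$ are retained.

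Next I would trace through the scalars in \eqref{E-sigma-theta} and \eqref{E-rklm}. Every term in $\sigma$ and $\theta$ is the trace of a product that contains at least one factor $\bm{C}=\bm{0}$ (in fact exactly $k$ such factors), hence vanishes, so $\sigma=\theta=\mathbb{0}$; consequently $\lambda\oplus\sigma=\lambda$ and $\mu\oplus\theta=\mu$. For $r_{k,l,m}$ one uses that $\bm{C}^{1-j}=\bm{I}$ when $j=1$ and $\bm{C}^{1-j}=\bm{0}$ when $j=0$: the product $\bm{A}^{i_{1}}\bm{B}^{j_{1}}\bm{C}^{1-j_{1}}\cdots\bm{A}^{i_{k}}\bm{B}^{j_{k}}\bm{C}^{1-j_{k}}$ is nonzero only if $j_{1}=\cdots=j_{k}=1$, that is, only when $l=j_{1}+\cdots+j_{k}=k$. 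Therefore $r_{k,l,m}=\mathbb{0}$ for $l<k$, and for $l=k$ the inner sum over the $j$'s reduces to the single term $\mathop\mathrm{tr}(\bm{A}^{i_{1}}\bm{B}\cdots\bm{A}^{i_{k}}\bm{B})$, which is precisely $r_{k,m}$. Substituting into \eqref{E-Gs-Ht} and keeping only the surviving index $l=k$ yields the redefined functions $G$ and $H$.

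Finally I would feed these reductions back into the three statements of Theorem~\ref{T-minxAxxBx-Cxleqx}: replacing $\lambda\oplus\sigma$ by $\lambda$, $\mu\oplus\theta$ by $\mu$, and $G,H$ by their new forms turns statements~1 and~2 of the theorem into statements~1 and~2 of the corollary, and, since $\bm{C}=\bm{0}$, the Kleene star in \eqref{E-xalpha1Abeta1BCu} becomes $(\alpha^{-1}\bm{A}\oplus\beta^{-1}\bm{B})^{\ast}$, giving statement~3. I do not anticipate a real obstacle here; the argument is bookkeeping, and the only point demanding care is the vanishing analysis of $r_{k,l,m}$ — making sure the convention $\bm{C}^{0}=\bm{I}$, $\bm{C}^{1}=\bm{0}$ is applied so that exactly the diagonal $l=k$ survives while all off-diagonal contributions are $\mathbb{0}$.
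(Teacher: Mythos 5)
Your proposal is correct and is essentially the paper's own derivation: the paper likewise obtains the corollary as a formal specialization of Theorem~\ref{T-minxAxxBx-Cxleqx} to $\bm{C}=\bm{0}$, noting that $\sigma=\theta=\mathbb{0}$, that $r_{k,l,m}\ne\mathbb{0}$ only when $l=k$ (so $r_{k,m}=r_{k,k,m}$), and then rewriting $G$ and $H$ and the Kleene star accordingly. Your explicit vanishing analysis via $\bm{C}^{1-j_{i}}$ being $\bm{I}$ or $\bm{0}$ is exactly the right justification for the step the paper states without elaboration.
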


This solution corresponds to that given in \cite{Krivulin2020Using} for the unconstrained problem. 

We conclude this section with an example of solution of a general two-dimensional problem, which illustrates in more detail the results obtained.
\begin{example}
\label{X-Aeqa11a12a21a22}
Suppose that one needs to find a vector $\bm{x}=(x_{1},x_{2})^{T}$ that solves problem \eqref{P-minxAxxBx-Cxleqx} with $n=2$ and the matrices given by
\begin{equation*}
\bm{A}
=
\begin{pmatrix}
a_{11} & a_{12}
\\
a_{21} & a_{22}
\end{pmatrix},
\qquad
\bm{B}
=
\begin{pmatrix}
b_{11} & b_{12}
\\
b_{21} & b_{22}
\end{pmatrix},
\qquad
\bm{C}
=
\begin{pmatrix}
c_{11} & c_{12}
\\
c_{21} & c_{22}
\end{pmatrix}.
\end{equation*}

To apply Theorem~\ref{T-minxAxxBx-Cxleqx}, we need to reformulate the assumptions and statements of the theorem for the two-dimensional case. First, we calculate the matrix 
\begin{equation*}
\bm{A}^{2}
=
\begin{pmatrix}
a_{11}^{2}\oplus a_{12}a_{21} & a_{12}(a_{11}\oplus a_{22})
\\
a_{21}(a_{11}\oplus a_{22}) & a_{12}a_{21}\oplus a_{22}^{2}
\end{pmatrix},
\end{equation*}
and then use \eqref{E-lambda-mu} to obtain the spectral radius of the matrix $\bm{A}$ as
\begin{equation*}
\lambda
=
\mathop\mathrm{tr}\bm{A}
\oplus
\mathop\mathrm{tr}\nolimits^{1/2}(\bm{A}^{2})
=
a_{11}\oplus a_{22}\oplus a_{12}^{1/2}a_{21}^{1/2}.
\end{equation*}

The spectral radius of $\bm{B}$ is similarly given by
\begin{equation*}
\mu
=
\mathop\mathrm{tr}\bm{B}
\oplus
\mathop\mathrm{tr}\nolimits^{1/2}(\bm{B}^{2})
=
b_{11}\oplus b_{22}\oplus b_{12}^{1/2}b_{21}^{1/2}.
\end{equation*}

Furthermore, with the matrix
\begin{equation*}
\bm{C}^{2}
=
\begin{pmatrix}
c_{11}^{2}\oplus c_{12}c_{21} & c_{12}(c_{11}\oplus c_{22})
\\
c_{21}(c_{11}\oplus c_{22}) & c_{12}c_{21}\oplus c_{22}^{2}
\end{pmatrix},
\end{equation*}
we derive the tropical determinant of the matrix $\bm{C}$ in the form
\begin{equation*}
\mathop\mathrm{Tr}(\bm{C})
=
\mathop\mathrm{tr}\bm{C}
\oplus
\mathop\mathrm{tr}\bm{C}^{2}
=
c_{11}\oplus c_{11}^{2}\oplus c_{22}\oplus c_{22}^{2}\oplus c_{12}c_{21}.
\end{equation*}

The assumptions of the theorem to be made about the matrices now take the form
\begin{equation*}
a_{11}\oplus a_{22}\oplus a_{12}^{1/2}a_{21}^{1/2}
>\mathbb{0},
\qquad
b_{11}\oplus b_{22}\oplus b_{12}^{1/2}b_{21}^{1/2}
>\mathbb{0},
\qquad
c_{11}\oplus c_{22}\oplus c_{12}c_{21}
\leq
\mathbb{1}.
\end{equation*}

Next, we adjust the statements of Theorem~\ref{T-minxAxxBx-Cxleqx}. To describe the conditions, we evaluate the~matrix
\begin{equation*}
\bm{A}\bm{C}
=
\begin{pmatrix}
a_{11}c_{11}\oplus a_{12}c_{21} & a_{11}c_{12}\oplus a_{12}c_{22}
\\
a_{21}c_{11}\oplus a_{22}c_{21} & a_{21}c_{12}\oplus a_{22}c_{22}
\end{pmatrix},
\end{equation*}
and then apply \eqref{E-sigma-theta} to write
\begin{equation*}
\sigma
=
\mathop\mathrm{tr}(\bm{A}\bm{C})
=
a_{11}c_{11}\oplus a_{12}c_{21}
\oplus
a_{21}c_{12}\oplus a_{22}c_{22}.
\end{equation*}

In the same way, the evaluation of the matrix $\bm{B}\bm{C}$ yields
\begin{equation*}
\theta
=
\mathop\mathrm{tr}(\bm{B}\bm{C})
=
b_{11}c_{11}\oplus b_{12}c_{21}
\oplus
b_{21}c_{12}\oplus b_{22}c_{22}.
\end{equation*}

The set of scalars defined by \eqref{E-rklm} reduces to one scalar
\begin{equation*}
r_{1,1,1}
=
\mathop\mathrm{tr}(\bm{A}\bm{B})
=
a_{11}b_{11}\oplus a_{12}b_{21}
\oplus
a_{21}b_{12}\oplus a_{22}b_{22},
\end{equation*}
which converts the functions at \eqref{E-Gs-Ht} into
\begin{equation*}
G(s)
=
s^{-1}\mathop\mathrm{tr}(\bm{A}\bm{B}),
\qquad
H(t)
=
t^{-1}\mathop\mathrm{tr}(\bm{A}\bm{B}).
\end{equation*}

It remains to refine the Kleene star matrix, which generates the solutions of the problem. Observing the assumption of the theorem, we have $c_{11},c_{22}\leq\mathop\mathrm{Tr}(\bm{C})\leq\mathbb{1}$. Moreover, it follows from the statement of the theorem that $\alpha\geq\lambda\oplus\sigma\geq a_{ii}$ and $\beta\geq\mu\oplus\theta\geq b_{ii}$ for $i=1,2$. Then, we can represent the matrix as follows:
%\begin{multline*}
\begin{multline*}
(\alpha^{-1}\bm{A}\oplus\beta^{-1}\bm{B}\oplus\bm{C})^{\ast}
=
\bm{I}
\oplus
\alpha^{-1}\bm{A}\oplus\beta^{-1}\bm{B}\oplus\bm{C}
\\=
\begin{pmatrix}
\mathbb{1} & \alpha^{-1}a_{12}\oplus\beta^{-1}b_{12}\oplus c_{12}
\\
\alpha^{-1}a_{21}\oplus\beta^{-1}b_{21}\oplus c_{21} & \mathbb{1}
\end{pmatrix}.
%\end{multline*}
\end{multline*}

To rewrite the results of Theorem~\ref{T-minxAxxBx-Cxleqx} in terms of the current problem, we consider two cases. If the condition $\mathop\mathrm{tr}(\bm{A}\bm{B})\leq(\lambda\oplus\mathop\mathrm{tr}(\bm{A}\bm{C}))(\mu\oplus\mathop\mathrm{tr}(\bm{B}\bm{C}))$ holds, then the Pareto frontier of the problem is the point $(\alpha,\beta)$ with
\begin{equation*}
\alpha
=
\lambda\oplus\mathop\mathrm{tr}(\bm{A}\bm{C}),
\qquad
\beta
=
\mu\oplus\mathop\mathrm{tr}(\bm{B}\bm{C}).
\end{equation*}

Otherwise, provided that $\mathop\mathrm{tr}(\bm{A}\bm{B})>(\lambda\oplus\mathop\mathrm{tr}(\bm{A}\bm{C}))(\mu\oplus\mathop\mathrm{tr}(\bm{B}\bm{C}))$, the Pareto frontier forms a segment that is defined as
\begin{equation*}
\lambda\oplus\mathop\mathrm{tr}(\bm{A}\bm{C})
\leq
\alpha
\leq
\mathop\mathrm{tr}(\bm{A}\bm{B})(\mu\oplus\mathop\mathrm{tr}(\bm{B}\bm{C}))^{-1},
\qquad
\beta
=
\alpha^{-1}\mathop\mathrm{tr}(\bm{A}\bm{B}).
\end{equation*}

The Pareto-optimal solution is given, using a parameter vector $\bm{u}=(u_{1},u_{2})^{T}$, by
\begin{equation*}
\bm{x}
=
\begin{pmatrix}
\mathbb{1} & \alpha^{-1}a_{12}\oplus\beta^{-1}b_{12}\oplus c_{12}
\\
\alpha^{-1}a_{21}\oplus\beta^{-1}b_{21}\oplus c_{21} & \mathbb{1}
\end{pmatrix}\bm{u},
\qquad
\bm{u}>\bm{0}.
\end{equation*}

Finally, assume that $\bm{A}$ and $\bm{B}$ are symmetrically reciprocal matrices, and $\bm{C}$ is an upper triangular matrix, given in the framework of the max-algebra $\mathbb{R}_{\max}$ by
\begin{equation*}
\bm{A}
=
\begin{pmatrix}
1 & 2
\\
1/2 & 1
\end{pmatrix},
\qquad
\bm{B}
=
\begin{pmatrix}
1 & 1/3
\\
3 & 1
\end{pmatrix},
\qquad
\bm{C}
=
\begin{pmatrix}
0 & 1
\\
0 & 0
\end{pmatrix}.
\end{equation*}

We see that the assumptions of Theorem~\ref{T-minxAxxBx-Cxleqx} are fulfilled because
\begin{equation*}
\bm{A}^{2}
=
\bm{A},
\qquad
\lambda
=
1,
\qquad
\bm{B}^{2}
=
\bm{B},
\qquad
\mu
=
1,
\qquad
\bm{C}^{2}
=
\bm{0},
\qquad
\mathop\mathrm{Tr}(\bm{C})
=
0.
\end{equation*}

Furthermore, we calculate the matrices
\begin{equation*}
\bm{A}\bm{B}
=
\begin{pmatrix}
6 & 2
\\
3 & 1
\end{pmatrix},
\qquad
\bm{A}\bm{C}
=
\begin{pmatrix}
0 & 1
\\
0 & 1/2
\end{pmatrix},
\qquad
\bm{B}\bm{C}
=
\begin{pmatrix}
0 & 1
\\
0 & 3
\end{pmatrix},
\end{equation*}
and then find their traces
\begin{equation*}
\mathop\mathrm{tr}(\bm{A}\bm{B})
=
6,
\qquad
\mathop\mathrm{tr}(\bm{A}\bm{C})
=
1/2,
\qquad
\mathop\mathrm{tr}(\bm{B}\bm{C})
=
3.
\end{equation*}

Since the condition $\mathop\mathrm{tr}(\bm{A}\bm{B})=6>(\lambda\oplus\mathop\mathrm{tr}(\bm{A}\bm{C}))(\mu\oplus\mathop\mathrm{tr}(\bm{B}\bm{C}))=3$ holds, the Pareto frontier of the problem is the segment
\begin{equation*} 
1
\leq
\alpha
\leq
2,
\qquad
\beta
=
6\alpha^{-1}.
\end{equation*} 

All Pareto-optimal solutions are given by
\begin{equation*}
\bm{x}
=
\begin{pmatrix}
1 & 2\alpha^{-1}\oplus3^{-1}\beta^{-1}\oplus1
\\
2^{-1}\alpha^{-1}\oplus3\beta^{-1}\oplus0 & 1
\end{pmatrix}\bm{u},
\qquad
\bm{u}>\bm{0}.
\end{equation*}

After substitution of $\beta=6\alpha^{-1}$ and some algebra, the generating matrix becomes
\begin{equation*}
\begin{pmatrix}
1 & 2\alpha^{-1}
\\
2^{-1}\alpha & 1
\end{pmatrix}.
\end{equation*}

Observing that both columns in this matrix are collinear, we use the first one to represent all solutions as 
\begin{equation*}
\bm{x}
=
\begin{pmatrix}
1
\\
2^{-1}\alpha
\end{pmatrix}u,
\qquad
u>0,
\qquad
1
\leq
\alpha
\leq
2.
\end{equation*}

Figure~\ref{F-PFPOS} offers a graphical illustration of the Pareto frontier shown by a thick segment (left), and the Pareto-optimal solution given by the cone formed by the vectors $(1,1/2)^{T}$ and $(1,1)^{T}$, which correspond to the end points of the frontier (right). 
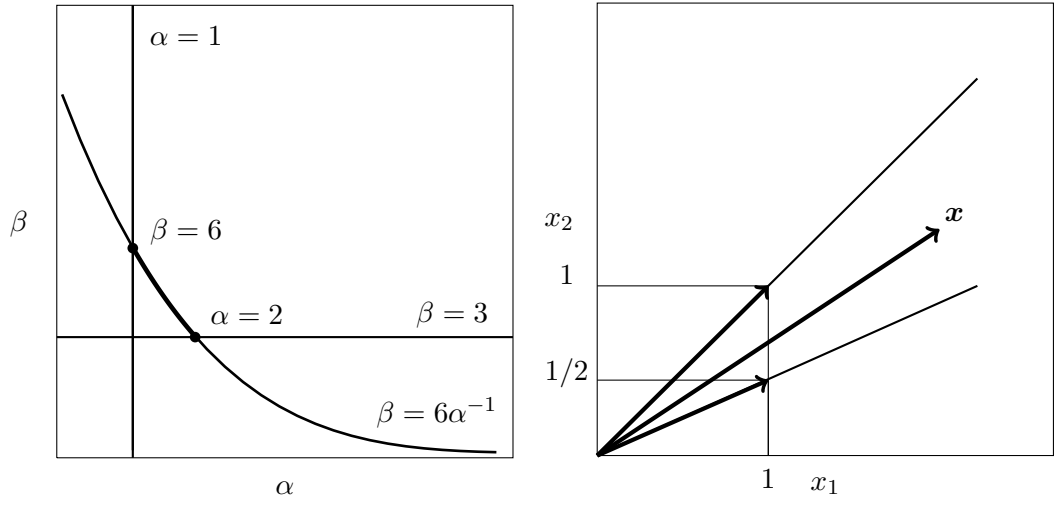
\begin{figure}[ht]
\begin{tikzpicture}

\begin{axis}[
axis lines=none,
]

\addplot[
black,
line width=1.0pt,
domain=0.5:0.9,
y domain=0:10,
]%{max(1/x^(3/2),1/x,1/x^(1/2)};
{(6*x-6)^4};

\addplot[
black,
line width=1.75pt,
domain=0.565:0.625,
y domain=0:10,
]%{max(1/x^(3/2),1/x,1/x^(1/2)};
{(6*x-6)^4};

\end{axis}

%\draw (0.5,0.5) -- (6.5,0.5) -- (6.5,6.5) -- (0.5,6.5) -- (0.5,0.5);
\draw (0.5,0.4) -- (6.5,0.4) -- (6.5,6.4) -- (0.5,6.4) -- (0.5,0.4);
\node at (0.0,3.5) {$\beta$};
\node at (3.5,0.0) {$\alpha$};

\draw [thick] (1.5,0.5) -- (1.5,6.4);
\draw [thick] (1.5,0.4) -- (1.5,6.4);
\draw [thick] (0.5,2.0) -- (6.5,2.0);

%\draw [thick] (1.0,5.5) -- (1.3,3.5) -- (2.0,2.3) -- (3.5,1.0) -- (5.5,0.5);
%\draw [thick] (1.0,5.5) -- (1.3,3.5) -- (1.9,2.3) -- (3.5,1.0) -- (5.5,0.5);

\fill (1.5,3.18) circle (2pt);
\fill (2.32,2.0) circle (2pt);

%\fill (1.9,2.3) circle (2pt);

%\draw [ultra thick] (1.5,3.1) -- (1.9,2.3);
%\draw [ultra thick] (1.9,2.3) -- (2.27,2.0);

\node at (2.2,6.0) {$\alpha=1$};

\node at (2.2,3.4) {$\beta=6$};

\node at (5.7,2.3) {$\beta=3$};

\node at (3.0,2.3) {$\alpha=2$};

\node at (5.5,1.0) {$\beta=6\alpha^{-1}$};
\end{tikzpicture}
\hspace{1mm}
\begin{tikzpicture}

%\draw (0.5,0.5) -- (6.5,0.5) -- (6.5,6.5) -- (0.5,6.5) -- (0.5,0.5);
\draw (0.5,0.4) -- (6.5,0.4) -- (6.5,6.4) -- (0.5,6.4) -- (0.5,0.4);
\node at (0.0,3.5) {$x_{2}$};
\node at (3.5,0.0) {$x_{1}$};

\draw [thick] (0.5,0.4) -- (5.5,5.4);
\draw [ultra thick,->] (0.5,0.4) -- (2.75,2.65);

\draw [thick] (0.5,0.4) -- (5.5,2.65);
\draw [ultra thick,->] (0.5,0.4) -- (2.75,1.4);

\draw (0.5,2.65) -- (2.75,2.65);
\draw (2.75,0.4) -- (2.75,2.65);

\draw (2.75,0.4) -- (2.75,1.4);
\draw (0.5,1.4) -- (2.75,1.4);

\node at (0.1,2.8) {$1$};
\node at (0.1,1.5) {$1/2$};

\node at (2.75,0.1) {$1$};

\draw [ultra thick,->] (0.5,0.4) -- (5.0,3.4);

\node at (5.2,3.6) {$\bm{x}$};

%\draw [thick] (1.5,0.4) -- (1.5,6.4);
%\draw [thick] (0.5,2.0) -- (6.5,2.0);

%\draw [thick] (1.0,5.5) -- (1.3,3.5) -- (2.0,2.3) -- (3.5,1.0) -- (5.5,0.5);
%\draw [thick] (1.0,5.5) -- (1.3,3.5) -- (1.9,2.3) -- (3.5,1.0) -- (5.5,0.5);

%\fill (1.5,3.15) circle (2pt);
%\fill (2.55,2.0) circle (2pt);

%\fill (1.9,2.3) circle (2pt);

%\draw [ultra thick] (1.5,3.1) -- (1.9,2.3);
%\draw [ultra thick] (1.9,2.3) -- (2.27,2.0);

%\node at (2.5,6.0) {$\alpha=\lambda\oplus\sigma$};

%\node at (5.5,2.3) {$\beta=\mu\oplus\theta$};

%\node at (3.45,2.3) {$H(\mu\oplus\theta)$};

%\node at (2.9,0.9) {$\beta=G(\alpha)$};
\end{tikzpicture}
\caption{Pareto frontier (\textbf{left}) and Pareto-optimal solution (\textbf{right}).}
\label{F-PFPOS}
\end{figure}
\end{example}

\section{Application to Constrained Bi-Criteria Decision Problem}
\label{S-ACBCDP}

Consider the bi-criteria decision problem at \eqref{P-min_maxijaijxixj-maxijbijxixj-maxijxjleqxi}, and represent it in terms of the max-algebra $\mathbb{R}_{\max}$ as follows:
\begin{equation*}
\begin{aligned}
&
\text{minimize}
&&
\left(
%\bigoplus_{1\leq i,j\leq n}\frac{a_{ij}x_{j}}{x_{i}},
\bigoplus_{1\leq i,j\leq n}x_{i}^{-1}a_{ij}x_{j},
%\bigoplus_{1\leq i,j\leq n}\frac{b_{ij}x_{j}}{x_{i}}
\bigoplus_{1\leq i,j\leq n}x_{i}^{-1}b_{ij}x_{j}
\right);
\\
&
\text{subject to}
&&
\bigoplus_{1\leq j\leq n}
c_{ij}x_{j}
\leq
x_{i},
\quad
i=1,\ldots,n.
\end{aligned}
%\label{P-min_oplusijaijxixj-oplusijbijxixj-oplusijxjleqxi}
\end{equation*}

With the notation $\bm{A}=(a_{ij})$, $\bm{B}=(b_{ij})$, $\bm{C}=(c_{ij})$ and $\bm{x}=(x_{j})$, the vector objective function to minimize becomes
\begin{equation*}
(\bm{x}^{-}\bm{A}\bm{x},\ \bm{x}^{-}\bm{B}\bm{x}),
\end{equation*}
whereas the inequality constraints can be written as
\begin{equation*}
\bm{C}\bm{x}
\leq
\bm{x}.
\end{equation*}

After combining the objective and constraint, the problem takes the form of the optimization problem at \eqref{P-minxAxxBx-Cxleqx}, and thus has the solution given by Theorem~\ref{T-minxAxxBx-Cxleqx}. 

Note that the conditions in the theorem on spectral radii $\lambda>0$ and $\mu>0$ are trivially fulfilled for pairwise comparison matrices. The condition $\mathop\mathrm{Tr}(\bm{C})\leq1$ implies that the system of inequality constraints on ratings has nontrivial (positive) solutions.

We now present an example of application of Theorem~\ref{T-minxAxxBx-Cxleqx} to a constrained bi-criteria decision problem with four alternatives. As a basis of the example, we take an unconstrained problem in \cite{Krivulin2020Using} to use intermediate results of this problem to save writing. Then, we add constraints on ratings of alternatives, and construct a Pareto-optimal solution of the constrained problem obtained.

\begin{example}
Consider a problem to evaluate ratings of $n=4$ alternatives from pairwise comparisons according to two criteria subject to constraints on the ratings. The matrices of comparisons and constraints are given by
\begin{gather*}
\bm{A}
=
\begin{pmatrix}
1 & 3 & 4 & 2
\\
1/3 & 1 & 1/2 & 1/3
\\
1/4 & 2 & 1 & 4
\\
1/2 & 3 & 1/4 & 1
\end{pmatrix},
\qquad
\bm{B}
=
\begin{pmatrix}
1 & 2 & 4 & 2
\\
1/2 & 1 & 1/3 & 1/2
\\
1/4 & 3 & 1 & 4
\\
1/2 & 2 & 1/4 & 1
\end{pmatrix},
\\
\bm{C}
=
\begin{pmatrix}
0 & 0 & 0 & 0
\\
0 & 0 & 0 & 1
\\
0 & 0 & 0 & 0
\\
0 & 0 & 0 & 0
\end{pmatrix}.
\end{gather*}

The unconstrained version of the problem (with $\bm{C}=\bm{0}$) has a complete solution obtained in~\cite{Krivulin2020Using}, where the Pareto frontier is derived as the segment
\begin{equation*}
2
\leq
\alpha
\leq
3,
\qquad
\beta
=
24\alpha^{-3}\oplus24^{1/3}\alpha^{-1/3}.
\end{equation*} 

The corresponding Pareto-optimal solutions are given in the parametric form 
\begin{equation*}
\bm{x}
=
(\alpha^{-1}\bm{A}\oplus\beta^{-1}\bm{B})^{\ast}
\bm{u},
\qquad
\bm{u}
>
\bm{0},
\end{equation*}
which reduces, on the ends of the frontier with $\alpha$ set to $\alpha_{1}=2$ and $\alpha_{2}=3$, to the vectors
\begin{equation*}
\bm{x}_{1}
=
\begin{pmatrix}
1
\\
1/6
\\
1/2
\\
1/4
\end{pmatrix}
u,
\qquad
u>0;
\qquad
\bm{x}_{2}
=
\begin{pmatrix}
1
\\
1/4
\\
1/2
\\
1/4
\end{pmatrix}
v,
\qquad
v>0.
\end{equation*}

Note that, in the constrained problem, the matrix $\bm{C}$ produces only one nontrivial inequality $x_{4}\leq x_{2}$, which means that the rating of alternative $2$ must be not less than that of $4$. Since the rating of alternative $2$ is always less than or equal to that of $4$ if no constraints are imposed, one can expect that the solution of the constrained problem will be the vector $\bm{x}_{2}$, at which the ratings of these alternatives become equal.  

We start with evaluation of the spectral radii $\lambda$ and $\mu$ given by \eqref{E-lambda-mu} for the matrices $\bm{A}$ and $\bm{B}$ of order $n=4$. By using the results obtained in \cite{Krivulin2020Using}, we can write
\begin{gather*}
\bm{A}^{2}
=
\begin{pmatrix}
1 & 8 & 4 & 16
\\
1/3 & 1 & 4/3 & 2
\\
2 & 12 & 1 & 4
\\
1 & 3 & 2 & 1
\end{pmatrix},
\qquad
\bm{A}^{3}
=
\begin{pmatrix}
8 & 48 & 4 & 16
\\
1 & 6 & 4/3 & 16/3
\\
4 & 12 & 8 & 4
\\
1 & 4 & 4 & 8
\end{pmatrix},
\\
\bm{A}^{4}
=
\begin{pmatrix}
16 & 48 & 32 & 16
\\
8/3 & 16 & 4 & 16/3
\\
4 & 16 & 16 & 32
\\
4 & 24 & 4 & 16
\end{pmatrix},
\qquad
\lambda
=
2.
\end{gather*}

Similar computations yield
\begin{gather*}
\bm{B}^{2}
=
\begin{pmatrix}
1 & 12 & 4 & 16
\\
1/2 & 1 & 2 & 4/3
\\
2 & 8 & 1 & 4
\\
1 & 2 & 2 & 1
\end{pmatrix},
\qquad
\bm{B}^{3}
=
\begin{pmatrix}
8 & 32 & 4 & 16
\\
2/3 & 6 & 2 & 8
\\
4 & 8 & 8 & 4
\\
1 & 6 & 4 & 8
\end{pmatrix},
\\
\bm{B}^{4}
=
\begin{pmatrix}
16 & 32 & 32 & 16
\\
4 & 16 & 8/3 & 8
\\
4 & 24 & 16 & 32
\\
4 & 16 & 4 & 16
\end{pmatrix},
\qquad
\mu
=
2.
\end{gather*}

Consider the matrix of constraints $\bm{C}$, and note that 
\begin{equation*}
\bm{C}^{2}
=
\bm{C}^{3}
=
\bm{C}^{4}
=
\bm{0},
\qquad
\mathop\mathrm{Tr}\bm{C}
=
0.
\end{equation*}

Since $\mathop\mathrm{Tr}\bm{C}<1$, we conclude that the assumptions of Theorem~\ref{T-minxAxxBx-Cxleqx} are fulfilled.

To take into account the constraints, we apply \eqref{E-sigma-theta} to evaluate the scalars $\sigma$ and $\theta$. By using properties of traces and eliminating terms that include powers of $\bm{C}$, we have
\begin{equation*}
\sigma
=
\mathop\mathrm{tr}(\bm{A}\bm{C})
\oplus
\mathop\mathrm{tr}\nolimits^{1/2}(\bm{A}^{2}\bm{C})
\oplus
\mathop\mathrm{tr}\nolimits^{1/2}((\bm{A}\bm{C})^{2})
\oplus
\mathop\mathrm{tr}\nolimits^{1/3}(\bm{A}^{3}\bm{C}).
\end{equation*}
  
We form the matrices and then take their traces to obtain 
\begin{gather*}
\bm{A}\bm{C}
=
\begin{pmatrix}
0 & 0 & 0 & 3
\\
0 & 0 & 0 & 1
\\
0 & 0 & 0 & 2
\\
0 & 0 & 0 & 3
\end{pmatrix},
\qquad
\bm{A}^{2}\bm{C}
=
\begin{pmatrix}
0 & 0 & 0 & 8
\\
0 & 0 & 0 & 1
\\
0 & 0 & 0 & 12
\\
0 & 0 & 0 & 3
\end{pmatrix},
\\
(\bm{A}\bm{C})^{2}
=
\begin{pmatrix}
0 & 0 & 0 & 9
\\
0 & 0 & 0 & 3
\\
0 & 0 & 0 & 6
\\
0 & 0 & 0 & 9
\end{pmatrix},
\qquad
\bm{A}^{3}\bm{C}
=
\begin{pmatrix}
0 & 0 & 0 & 48
\\
0 & 0 & 0 & 6
\\
0 & 0 & 0 & 12
\\
0 & 0 & 0 & 4
\end{pmatrix},
\qquad
\sigma
=
3.
\end{gather*}

In the same way, we calculate
\begin{gather*}
\bm{B}\bm{C}
=
\begin{pmatrix}
0 & 0 & 0 & 2
\\
0 & 0 & 0 & 1
\\
0 & 0 & 0 & 3
\\
0 & 0 & 0 & 2
\end{pmatrix},
\qquad
\bm{B}^{2}\bm{C}
=
\begin{pmatrix}
0 & 0 & 0 & 12
\\
0 & 0 & 0 & 1
\\
0 & 0 & 0 & 8
\\
0 & 0 & 0 & 2
\end{pmatrix},
\\
(\bm{B}\bm{C})^{2}
=
\begin{pmatrix}
0 & 0 & 0 & 4
\\
0 & 0 & 0 & 2
\\
0 & 0 & 0 & 6
\\
0 & 0 & 0 & 4
\end{pmatrix},
\qquad
\bm{B}^{3}\bm{C}
=
\begin{pmatrix}
0 & 0 & 0 & 32
\\
0 & 0 & 0 & 6
\\
0 & 0 & 0 & 8
\\
0 & 0 & 0 & 6
\end{pmatrix},
\qquad
\theta
=
2.
\end{gather*}

Furthermore, we check whether the Pareto frontier of the constrained problem degenerates to a single point. We use \eqref{E-Gs-Ht} with $n=4$ to construct the function
\begin{multline*}
H(t)
=
r_{3,3,1}
t^{-3}
\oplus
(r_{2,2,1}
\oplus
r_{3,2,1})
t^{-2}
\oplus
(r_{1,1,1}
\oplus
r_{2,1,1}
\oplus
r_{2,2,2}^{1/2}
\oplus
r_{3,1,1})
t^{-1}
\\\oplus
(r_{1,1,2}^{1/2}
\oplus
r_{2,1,2}^{1/2})
t^{-1/2}
\oplus
r_{1,1,3}^{1/3}
t^{-1/3}.
\end{multline*}

Application of \eqref{E-rklm} together with properties of traces yields
\begin{gather*}
r_{1,1,1}
=
\mathrm{tr}(\bm{A}\bm{B}),
\qquad
r_{1,1,2}
=
\mathop\mathrm{tr}(\bm{A}^{2}\bm{B}),
\qquad
r_{1,1,3}
=
\mathop\mathrm{tr}(\bm{A}^{3}\bm{B}),
\\
r_{2,1,1}
=
\mathop\mathrm{tr}(\bm{A}\bm{B}\bm{C})
\oplus
\mathop\mathrm{tr}(\bm{B}\bm{A}\bm{C}),
\qquad
r_{2,1,2}
=
\mathop\mathrm{tr}(\bm{A}^{2}\bm{B}\bm{C})
\oplus
\mathop\mathrm{tr}(\bm{B}\bm{A}^{2}\bm{C})
\oplus
\mathop\mathrm{tr}(\bm{A}\bm{B}\bm{A}\bm{C}),
\\
r_{2,2,1}
=
\mathop\mathrm{tr}(\bm{A}\bm{B}^{2}),
\qquad
r_{2,2,2}
=
\mathop\mathrm{tr}(\bm{A}^{2}\bm{B}^{2})
\oplus
\mathop\mathrm{tr}((\bm{A}\bm{B})^{2}),
\qquad
r_{3,1,1}
=
\mathop\mathrm{tr}(\bm{A}\bm{C}\bm{B}\bm{C}),
\\
r_{3,2,1}
=
\mathop\mathrm{tr}(\bm{A}\bm{B}^{2}\bm{C})
\oplus
\mathop\mathrm{tr}(\bm{B}\bm{A}\bm{B}\bm{C})
\oplus
\mathop\mathrm{tr}(\bm{B}^{2}\bm{A}\bm{C}),
\qquad
r_{3,3,1}
=
\mathop\mathrm{tr}(\bm{A}\bm{B}^{3}).
\end{gather*}

To evaluate traces, we first form the matrices 
\begin{gather*}
\bm{A}\bm{B}
=
\begin{pmatrix}
3/2 & 12 & 4 & 16
\\
1/2 & 3/2 & 4/3 & 2
\\
2 & 8 & 1 & 4
\\
3/2 & 3 & 2 & 3/2
\end{pmatrix},
\qquad
\bm{B}\bm{A}
=
\begin{pmatrix}
1 & 8 & 4 & 16
\\
1/2 & 3/2 & 2 & 4/3
\\
2 & 12 & 3/2 & 4
\\
2/3 & 3 & 2 & 1
\end{pmatrix},
\\
\bm{A}^{2}\bm{B}
=
\begin{pmatrix}
8 & 32 & 4 & 16
\\
1 & 4 & 4/3 & 16/3
\\
6 & 12 & 8 & 6
\\
3/2 & 6 & 4 & 8
\end{pmatrix},
\qquad
\bm{A}^{3}\bm{B}
=
\begin{pmatrix}
24 & 48 & 32 & 24
\\
3 & 32/3 & 4 & 16/3
\\
6 & 24 & 16 & 32
\\
4 & 16 & 4 & 16
\end{pmatrix},
\\
\bm{A}\bm{B}^{2}
=
\begin{pmatrix}
8 & 32 & 6 & 16
\\
1 & 4 & 2 & 16/3
\\
4 & 8 & 8 & 4
\\
3/2 & 6 & 6 & 8
\end{pmatrix},
\qquad
\bm{A}^{2}\bm{B}^{2}
=
\begin{pmatrix}
16 & 32 & 32 & 16
\\
8/3 & 32/3 & 4 & 16/3
\\
6 & 24 & 24 & 32
\\
4 & 16 & 6 & 16
\end{pmatrix},
\\
(\bm{A}\bm{B})^{2}
=
\begin{pmatrix}
24 & 48 & 32 & 24
\\
3 & 32/3 & 4 & 8
\\
6 & 24 & 32/3 & 32
\\
4 & 18 & 6 & 24
\end{pmatrix},
\qquad
\bm{A}\bm{B}^{3}
=
\begin{pmatrix}
16 & 32 & 32 & 24
\\
8/3 & 32/3 & 4 & 8
\\
4 & 24 & 16 & 32
\\
4 & 18 & 6 & 24
\end{pmatrix}.
\end{gather*}

Next, we apply these matrices to calculate
\begin{gather*}
\bm{A}\bm{B}\bm{C}
=
\begin{pmatrix}
0 & 0 & 0 & 12
\\
0 & 0 & 0 & 3/2
\\
0 & 0 & 0 & 8 &
\\
0 & 0 & 0 & 3
\end{pmatrix},
\qquad
\bm{B}\bm{A}\bm{C}
=
\begin{pmatrix}
0 & 0 & 0 & 8
\\
0 & 0 & 0 & 3/2
\\
0 & 0 & 0 & 12
\\
0 & 0 & 0 & 3
\end{pmatrix},
\\
\bm{A}^{2}\bm{B}\bm{C}
=
\begin{pmatrix}
0 & 0 & 0 & 32
\\
0 & 0 & 0 & 4
\\
0 & 0 & 0 & 12
\\
0 & 0 & 0 & 6
\end{pmatrix},
\qquad
\bm{B}\bm{A}^{2}\bm{C}
=
\begin{pmatrix}
0 & 0 & 0 & 48
\\
0 & 0 & 0 & 4
\\
0 & 0 & 0 & 12
\\
0 & 0 & 0 & 4
\end{pmatrix},
\\
\bm{A}\bm{B}\bm{A}\bm{C}
=
\begin{pmatrix}
0 & 0 & 0 & 48
\\
0 & 0 & 0 & 6
\\
0 & 0 & 0 & 12
\\
0 & 0 & 0 & 9/2
\end{pmatrix},
\qquad
\bm{A}\bm{B}^{2}\bm{C}
=
\begin{pmatrix}
0 & 0 & 0 & 32
\\
0 & 0 & 0 & 4
\\
0 & 0 & 0 & 8
\\
0 & 0 & 0 & 6
\end{pmatrix},
\\
\bm{B}\bm{A}\bm{B}\bm{C}
=
\begin{pmatrix}
0 & 0 & 0 & 32
\\
0 & 0 & 0 & 6
\\
0 & 0 & 0 & 12
\\
0 & 0 & 0 & 6
\end{pmatrix},
\qquad
\bm{B}^{2}\bm{A}\bm{C}
=
\begin{pmatrix}
0 & 0 & 0 & 48
\\
0 & 0 & 0 & 4
\\
0 & 0 & 0 & 12
\\
0 & 0 & 0 & 4
\end{pmatrix},
\\
\bm{A}\bm{C}\bm{B}\bm{C}
=
\begin{pmatrix}
0 & 0 & 0 & 6
\\
0 & 0 & 0 & 2
\\
0 & 0 & 0 & 4
\\
0 & 0 & 0 & 6
\end{pmatrix}.
\end{gather*}

Evaluating the traces of the matrices obtained results in
\begin{gather*}
r_{1,1,1}
=
3/2,
\qquad
r_{1,1,2}
=
8,
\qquad
r_{1,1,3}
=
24,
\qquad
r_{2,1,1}
=
3,
\qquad
r_{2,1,2}
=
6,
\\
r_{2,2,1}
=
8,
\qquad
r_{2,2,2}
=
24,
\qquad
r_{3,1,1}
=
6,
\qquad
r_{3,2,1}
=
6,
\qquad
r_{3,3,1}
=
24.
\end{gather*}

Finally, we construct the function
\begin{equation*}
H(t)
=
24t^{-3}
\oplus
8t^{-2}
\oplus
6t^{-1}
\oplus
8^{1/2}t^{-1/2}
\oplus
24^{1/3}t^{-1/3}.
\end{equation*}
 
We calculate $\lambda\oplus\sigma=3$, $\mu\oplus\theta=2$ and $H(\mu\oplus\theta)=H(2)=3$. Since the equality $H(\mu\oplus\theta)=\lambda\oplus\sigma$ is valid, it follows from Theorem~\ref{T-minxAxxBx-Cxleqx} that the Pareto frontier shrinks to the point
\begin{equation*}
\alpha
%=
%\lambda\oplus\sigma
=
3,
\qquad
\beta
%=
%\mu\oplus\theta
=
2,
\end{equation*}
whereas the solution is given by
\begin{equation*}
\bm{x}
=
(3^{-1}\bm{A}\oplus2^{-1}\bm{B}\oplus\bm{C})^{\ast}
\bm{u},
\qquad
\bm{u}
>
\bm{0}.
\end{equation*}

Furthermore, we consider the matrix
\begin{equation*}
3^{-1}\bm{A}\oplus2^{-1}\bm{B}\oplus\bm{C}
=
\begin{pmatrix}
1/2 & 1 & 2 & 1
\\
1/4 & 1/2 & 1/6 & 1
\\
1/8 & 3/2 & 1/2 & 2
\\
1/4 & 1 & 1/8 & 1/2
\end{pmatrix},
\end{equation*}
and calculate its second and third powers
\begin{equation*}
\begin{pmatrix}
1/4 & 3 & 1 & 4
\\
1/4 & 1 & 1/2 & 1/2
\\
1/2 & 2 & 1/4 & 3/2
\\
1/4 & 1/2 & 1/2 & 1
\end{pmatrix},
\qquad
\begin{pmatrix}
1 & 4 & 1/2 & 3
\\
1/4 & 3/4 & 1/2 & 1
\\
1/2 & 3/2 & 1 & 2
\\
1/4 & 1 & 1/2 & 1
\end{pmatrix}.
\end{equation*}

The Kleene star matrix, which generates the solutions, takes the form
\begin{equation*}
(3^{-1}\bm{A}\oplus2^{-1}\bm{B}\oplus\bm{C})^{\ast}
=
\begin{pmatrix}
1 & 4 & 2 & 4
\\
1/4 & 1 & 1/2 & 1
\\
1/2 & 2 & 1 & 2
\\
1/4 & 1 & 1/2 & 1
\end{pmatrix}.
\end{equation*}

Since all columns in this matrix are collinear, we take one of them, say the first, to write the solution as
\begin{equation*}
\bm{x}
=
\begin{pmatrix}
1
\\
1/4
\\
1/2
\\
1/4
\end{pmatrix}
u,
\qquad
u>0.
\end{equation*}

Specifically, with $u=1$, we have the vector of ratings $\bm{x}=(1,0.25,0.5,0.25)^{T}$.

Finally note that the obtained solution coincides with the vector $\bm{x}_{2}$ from the solution set of the unconstrained problem in \cite{Krivulin2020Using}, which is in agreement with the prior assessment.
\end{example}

To conclude this section, we observe that the computational scheme demonstrated by the example mainly involves simple algebraic manipulations with matrices and their traces, which can be directly extended to decision problems with a greater number of alternatives (with matrices of higher order). At the same time, the extension of the solution to problems with three and more criteria leads to more complicated analytical technique, which needs to be further developed.

%%%%%%%%%%%%%%%%%%%%%%%%%%%%%%%%%%%%%%%%%%
%\section{Conclusions}
\section{Conclusions}
\label{S-C}

In this paper, we have developed a novel application of tropical (idempotent) algebra to solve a new bi-criteria problem of rating alternatives through pairwise comparisons, subject to constraints on the relative values of ratings. A complete set of Pareto-optimal solutions of the problem has been obtained analytically in a compact vector form ready for further analysis and computations. This result extends previous solutions of an unconstrained bi-criteria problem and a constrained single-criterion problem.

The new solution provides decision-makers with a direct description of all Pareto-optimal decisions that satisfy additional conditions (constraints), which allows to improve the quality and efficiency of decision making in conditional problems in practice. Application examples include the problem of pairwise comparisons that are used to restore the total order on a partially ordered set of alternatives, and the two-stage procedure of evaluating alternatives from pairwise comparisons, where results of the first stage can override the results of the second.   

The proposed algebraic approach may serve to complement and supplement existing methods, including the heuristic numerical AHP method as well as well-justified analytical WGM method, which both find a single solution rather than derive all Pareto-optimal solutions of the multi-criteria decision-making problem under study. In addition, the results presented in the paper, demonstrate a strong potential of the approach to offer complete analytical solutions of pairwise comparison problems with various constraints, which are difficult to handle by existing techniques and are still not adequately addressed in the literature.   

As a limitation of the approach, one can consider the complexity of the algebraic expressions in the analytical solution, which rapidly growths as the number of criteria increases. However, when the number of criteria is small (say, less than ten), we expect that appropriate application of computer algebra systems may help to overcome this difficulty.
 
We suggest future research that focuses on extending the results to bi-criteria problems with additional constraints and to problems with more than two criteria. The evaluation of computational complexity of the solution is also of interest.

\bibliographystyle{abbrvurl}

\bibliography{Algebraic_solution_to_constrained_bi-criteria_decision_problem_of_rating_alternatives_through_pairwise_comparisons}

\end{document}